\newtheorem{Proposition}{Proposition}
\newtheorem{Theorem}{Theorem}
\newtheorem{Lemma}{Lemma}
\newtheorem{Assumption}{Assumption}
\DeclareMathOperator{\re}{\mathbb{R}}
\DeclareMathOperator{\na}{\mathbb{N}}
\newcommand{\E}{\mathbb{E}}
\newcommand{\ind}{\mathds{1}}
\newcommand{\e}{\mathrm{e}}
\newcommand{\Prob}{\mathbb{P}}
\def\bx{\mathbf{x}}
\def\bX{\mathbf{X}}
\def\by{\mathbf{y}}
\def\bY{\mathbf{Y}}
\def\d{\mathrm{d}}
\def\bbeta{\boldsymbol\beta}
\def\Oset{\mathrm{O}}
\newcommand{\I}{\mathbf{I}}
\def\y0{\mathrm{y}_0}
\newcommand{\zl}{z_{\text{l}}}
\newcommand{\zr}{z_{\text{r}}}
\newcommand{\lambdar}{\lambda_{\text{r}}}
\newcommand{\lambdal}{\lambda_{\text{l}}}
\begin{document}

\def\figureautorefname{Figure}
\def\algorithmautorefname{Algorithm}
\def\sectionautorefname{Section}
\def\subsectionautorefname{Section}
\def\subsubsectionautorefname{Section}
\def\Propositionautorefname{Proposition}
\def\Theoremautorefname{Theorem}
\def\Lemmaautorefname{Lemma}
\def\Corollaryautorefname{Corollary}
\def\Exampleautorefname{Example}
\def\Remarkautorefname{Remark}
\def\Assumptionautorefname{Assumption}
\renewcommand*\footnoterule{}

\title{Simple proof of robustness for Bayesian heavy-tailed linear regression models}

\author{Philippe Gagnon$^{1}$}

\maketitle

\thispagestyle{empty}

\noindent $^{1}$Department of Mathematics and Statistics, Universit\'{e} de Montr\'{e}al.

\begin{abstract}
 In the Bayesian literature, a line of research called \textit{resolution of conflict} is about the characterization of robustness against outliers of statistical models. The robustness characterization of a model is achieved by establishing the limiting behaviour of the posterior distribution under an asymptotic framework in which the outliers move away from the bulk of the data. The proofs of the robustness characterization results, especially the recent ones for regression models, are technical and not intuitive, limiting the accessibility and preventing the development of theory in that line of research. In this paper, we highlight that the proof complexity is due to the generality of the assumptions on the prior distribution. To address the issue of accessibility, we present a significantly simpler proof for a linear regression model with a specific class of prior distributions, among which we find typically used prior distributions. The class of prior distributions is such that each regression coefficient has a sub-exponential distribution, which allows to exploit a tail bound, contrarily to previous approaches. The proof is intuitive and uses classical results of probability theory. The generality of the assumption on the error distribution is also appealing; essentially, it can be any distribution with regularly varying or log-regularly varying tails. So far, there does not exist a result in such generality for models with regularly varying distributions. We also investigate the necessity of the assumptions. To promote the development of theory in resolution of conflict, we highlight how the key steps of the proof can be adapted for other models and present an application of the proof technique in the context of generalized linear models.
\end{abstract}

\noindent Keywords: log-regularly varying functions, outliers, resolution of conflict, Student's $t$ distribution, sub-exponential distributions, regularly varying functions.

\section{Introduction}\label{sec:intro}

The topic of robustness against outliers is classical in statistics. An objective when studying this topic is to evaluate whether commonly used statistical methods are robust against outliers or not. A method is deemed not robust if a single observation can have an arbitrary impact on the estimation. A canonical example of a non-robust method is a linear regression with normal errors, as seen in \autoref{fig:mean}. In this figure, we present the results of a simple numerical experiment\footnote{The code to reproduce our numerical results is available online (see ancillary files on \url{https://arxiv.org/abs/2501.06349}).} based on $n = 20$ observations $y_1, \ldots, y_n \in \re$ of a dependent variable and $n$ data points $(x_1, x_2, \ldots, x_n) = (1, 2, \ldots, n)$ of an explanatory variable. The observations $y_1, \ldots, y_n$ were first sampled using a linear regression model with an intercept and slope coefficients both equal to 1 and independent errors each having a standard normal distribution. The observation $y_n$ was then gradually increased to obtain a sequence of data sets. For each data set, the slope coefficient is estimated using the posterior mean in a Bayesian analysis; see \autoref{sec:numerical} for the details. In \autoref{fig:mean}, we also present estimation results for the Bayesian Student's $t$ linear regression, which is the preferred Bayesian robust alternative. We observe in \autoref{fig:mean} that the estimated regression line associated with the normal model is attracted by the outlier artificially moving towards infinity, while, in contrast, that associated with the Student's $t$ model is not. This allows to conclude that the former method is non-robust while the latter is. This important distinction between these two methods is a consequence of a different tail decay: the exponential decay of the normal probability density function (PDF) leads to extremely small probabilities of observing such an extreme data point, making the normal unadapted to its presence.

\begin{figure}[ht]
 \centering
   \includegraphics[width=0.40\textwidth]{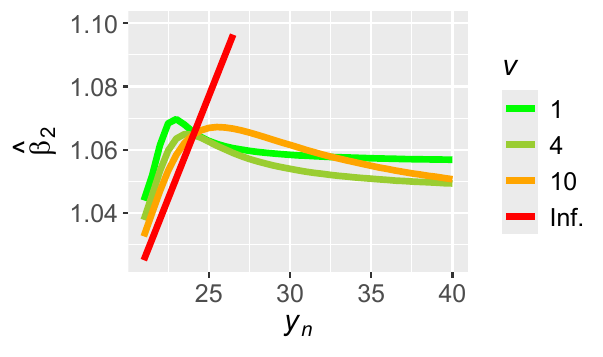}
  \vspace{-3mm}
\caption{Posterior mean of the slope coefficient $\beta_2$ as $y_{n}$ increases for the Student's $t$ linear regression with different degrees of freedom $\nu$, where $\nu = \text{Inf.}$ represents the normal linear regression.}\label{fig:mean}
\end{figure}

There is a rich frequentist literature on the topic of robustness against outliers, especially about linear regression, with celebrated works like that of \cite{huber1973robust} and \cite{beaton1974fitting} about the Huber and Tukey's biweight  M-estimators. The Bayesian literature is more sparse. A line of research in the Bayesian literature, called \textit{resolution of conflict} \citep{o2012bayesian}, aims to (mathematically) characterize the limiting behaviour of robust alternatives as outliers move further and further away from the bulk of the data, like the limiting behaviour observed in \autoref{fig:mean} for the Student's $t$ linear regression. The characterization is achieved by studying the limit of the associated posterior densities. Studying the limit of a posterior density is not easy due essentially to the presence of an integral in the denominator representing the marginal density evaluated at the observations or, equivalently, the normalizing constant.

First works in resolution of conflict focused on the location model (e.g., \cite{dawid1973posterior}, \cite{o1979outlier} and \cite{angers2007conflicting}) and the location--scale model (e.g., \cite{andrade2011bayesian} and \cite{desgagne2015robustness}). In recent years, the focus has been on linear regression \citep{DesGag2019, gagnon2020, gagnon2020PCR, 10.1214/22-BA1330, gagnon2023theoretical, hamura2020log, HAMURA2024110130, hamura2024short}, generalized linear models \citep{gagnon2023GLM, hamura2021robust}, Student's $t$ process \citep{https://doi.org/10.1111/sjos.12611} and accelerated failure time models \citep{hamura2025robust}. The proofs of the robustness characterization results are generally highly technical and not intuitive, limiting the accessibility and preventing the development of theory in that line of research. For instance, the first proof for the usual linear regression model, in \cite{gagnon2020}, involves the decomposition of the parameter space in mutually exclusive sets for which it is difficult to develop an intuition and which makes the majority of the steps in the proof technical, in addition to making the proof lengthy.

\cite{hamura2024short} recently highlighted the issue of accessibility. With the goal of improving accessibility of robustness characterization results and their proofs, the author presented a proof for a linear regression with a specific heavy-tailed error distribution. We however consider the attempt unsatisfactory as the heavy-tailed distribution assumed is not used in practice and, perhaps more importantly, the proof technique is the same as in \cite{gagnon2020} with the decomposition of the parameter space into mutually exclusive sets; the proof is thus not intuitive and highly technical. A goal of the current paper is to address the issue of accessibility in a way that is, in our opinion, more effective.

 With the current paper, we highlight that the undesired characteristics of the previous proofs are due to the generality of the assumptions on the prior distribution. The proposed approach is thus different than in \cite{hamura2024short}: we consider a specific class of prior distributions instead. In the paper, we focus on the typically used prior distribution in Bayesian normal linear regression given its conjugacy properties. This prior distribution is a conditional normal distribution for the regression coefficients and an inverse-gamma distribution for the squared scale of the errors. The framework is natural as it can be seen as that where a statistician is usually happy with the Bayesian normal linear regression (with this prior distribution), but this statistician worries that it may not be adapted for the current data set for which the presence of outliers is probable; thus the statistician wants to gain robustness and (only) changes the distribution assumption on the errors. By considering this specific prior distribution, we are able to present a robustness characterization result with a significantly simpler proof, to the extent that we are able to consider a remarkably general distribution assumption on the errors while keeping the proof simple; essentially, it can be any distribution with regularly varying or log-regularly varying tails. Note that there does not exist a result in such generality for models with regularly varying distributions. The conditional normal prior distribution allows to exploit a tail bound, representing the major difference with the previous proofs and the main reason of the significantly simpler proof. The proof is thus intuitive, uses classical probability arguments and is significantly shorter than the previous ones. To promote the development of theory in resolution of conflict, we highlight the key steps and explain how the proof can be adapted for another model. As an illustration, we present an application of the proof technique in the context of generalized linear models (GLMs).

  The main feature of the regression coefficient conditional prior PDF allowing the tail bound to be effective is its exponential tail decay. We thus show that the proof remains essentially unchanged and retains the same level of simplicity by instead assuming that each regression coefficient follows a sub-exponential distribution \citep[Section 2.7]{vershynin2018high}, without assuming that the distribution is the same for all the coefficients. The class of sub-exponential distributions include the Laplace distribution and sub-Gaussian distributions \citep[Section 2.5]{vershynin2018high}, meaning that it also includes the normal distribution. Regarding the error scale prior distribution, its main feature which contributes to the simplicity of the proof is having finite inverse moments. Therefore, we can use any distribution for positive random variables having finite inverse moments, like the log-normal distribution.

 We now describe how the rest of the paper is organized. In \autoref{sec:context}, we present in more detail the context, the model and the assumptions. In \autoref{sec:result}, we present the robustness characterization result and, in \autoref{sec:proof}, its proof. A conclusion follows in \autoref{sec:conclusion}.

 We finish this section with a general remark about robustness: there is of course a price to pay for a gain in robustness like that observed in \autoref{fig:mean} for the Student's $t$ linear regression. The price is twofold. Firstly, there is a loss in \textit{efficiency}, in the sense that, in the absence of outliers, the estimation is less efficient than with the benchmark (e.g., normal linear regression). The efficiency loss has been precisely measured for the Student's $t$ linear regression model in \cite{gagnon2023theoretical}. Secondly, there is an added computational complexity as all integrals need to be approximated using numerical methods, typically Markov chain Monte Carlo (MCMC) methods, even for linear regression. Hamiltonian Monte Carlo \citep{Duane1987, neal2011mcmc} has been used to approximate the posterior means for the Student's $t$ linear regression in \autoref{fig:mean}; see \autoref{sec:numerical} for more details. Variable selection and model averaging can be performed using a reversible jump algorithm \citep{green1995reversible, green2003trans}; see \cite{gagnon2019RJ}, \cite{gagnon2019NRJ} and \cite{gagnon2023} for efficient informed and non-reversible variants.

\section{Context, model and assumptions}\label{sec:context}

Let us assume that we have access to a data set of the form $\{\bx_i, y_i\}_{i=1}^n$, where $\bx_1, \ldots, \bx_n \in \re^p$ are $n \in \na$ vectors of explanatory variable data points and $y_1, \ldots, y_n \in \re$ are $n$ observations of a dependent variable, $p$ being a positive integer. Let us assume that one is interested in modelling the dependent variable through its relationship with the explanatory variables and, more specifically, in using a linear regression model. In such a model, it is assumed that $y_1, \ldots, y_n$ are realizations of $n$ random variables $Y_1, \ldots, Y_n$ defined as follows:
\begin{align}\label{eqn:linear_reg}
 Y_i = \bx_i^T \bbeta + \sigma \varepsilon_i, \quad i = 1, \ldots, n,
\end{align}
where $\bbeta =(\beta_1, \ldots, \beta_p)^T \in \re^p$ is a vector of regression coefficients, $\sigma > 0$ is a scale parameter and $\varepsilon_1, \ldots, \varepsilon_n \in \re$ are standardized errors. In an homoscedastic model, it is assumed that $\varepsilon_1, \ldots, \varepsilon_n$ are independent and identically distributed random variables, each having a PDF denoted here by $f$. In a Bayesian model, it is typically assumed that the two groups of random variables $(\varepsilon_1, \ldots, \varepsilon_n)$ and $(\bbeta, \sigma)$ are independent. The vectors $\bx_1, \ldots, \bx_n$ are thus typically considered to be fixed and known, that is not realizations of random variables, contrarily to $y_1, \ldots, y_n$.

We now present the assumptions on $f$.
\begin{Assumption}\label{ass:PDF}
   The PDF $f$ is strictly positive, symmetric and monotonic, that is $f(y) > 0$ and $f(y) = f(|y|)$ for all $y \in \re$, and, for any $|y_2| \geq |y_1|$, $f(|y_2|) \leq f(|y_1|)$. Also, it is bounded, that is there exists a constant $C > 0$ such that $f \leq C$. Finally, either of the following holds:
   \begin{enumerate}

\item[(i)] $f$ is a regularly varying function: there exist constants $C_f > 0$ and $\alpha > 0$ such that
    \[
     \lim_{y \rightarrow \pm \infty} \frac{f(y)}{C_f \, |y|^{-(\alpha + 1)}} = 1;
    \]

    \item [(ii)] $f$ is a log-regularly varying function: there exist constants $C_f > 0$ and $\alpha > 0$ such that
    \[
     \lim_{y \rightarrow \pm \infty} \frac{f(y)}{C_f \, |y|^{-1}(\log|y|)^{-(\alpha + 1)}} = 1.
    \]

   \end{enumerate}
\end{Assumption}
The first part of \autoref{ass:PDF} (positivity, symmetry, monotonicity and boundedness) represents regularity conditions which simplify the theoretical analysis. The second part is about tail thickness, heavy tails being essentially necessary for robustness. In this second part, we assume that $f$ is either regularly varying or log-regularly varying. Regularly varying functions have been extensively studied (see, e.g., \cite{resnick2007heavy} for a reference) and appear in many contexts, such as statistical network modelling \citep{caron2017sparse} and, of course, robustness against outliers as in the current paper. Note that we made an abuse of terminology in \autoref{ass:PDF} as the formal definition of a regularly varying function is slightly more general; we presented this version to simplify. We will nevertheless use the terminology ``regularly varying functions'' to refer to functions satisfying (i) in \autoref{ass:PDF}. As stated in \autoref{prop:Student}, the preferred PDF in robustness, the Student's $t$, satisfies \autoref{ass:PDF} as a regularly varying function.

\begin{Proposition}\label{prop:Student}
   Let $f$ be a Student's $t$ PDF with $\nu > 0$ degrees of freedom, that is
   \[
    f(y) = \frac{\Gamma\left(\frac{\nu + 1}{2}\right)}{\sqrt{\pi \nu} \, \Gamma\left(\frac{\nu}{2}\right)} \left(1 + \frac{y^2}{\nu}\right)^{-\frac{\nu + 1}{2}}, \quad y \in \re,
   \]
   where $\Gamma$ is the gamma function. Then, \autoref{ass:PDF} is satisfied.
\end{Proposition}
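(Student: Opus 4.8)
The plan is to verify the items of \autoref{ass:PDF} one at a time, treating the normalizing constant $c_\nu := \Gamma\!\left(\tfrac{\nu+1}{2}\right)/\bigl(\sqrt{\pi\nu}\,\Gamma\!\left(\tfrac{\nu}{2}\right)\bigr)$ as a fixed positive number (it is positive because $\Gamma$ is positive on $(0,\infty)$). First I would handle the regularity conditions. Strict positivity is immediate since $f(y) = c_\nu\bigl(1 + y^2/\nu\bigr)^{-(\nu+1)/2}$ is a positive constant times a positive power, so $f(y)>0$ for all $y\in\re$. Symmetry $f(y)=f(|y|)$ holds because $f$ depends on $y$ only through $y^2=|y|^2$. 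For monotonicity, I would write $f(|y|) = c_\nu\, g(|y|^2/\nu)$ with $g(t) := (1+t)^{-(\nu+1)/2}$, and note that $g$ is strictly decreasing on $[0,\infty)$ since $\nu>0$; hence $|y_2|\ge|y_1|$ implies $|y_2|^2/\nu \ge |y_1|^2/\nu$ and therefore $f(|y_2|)\le f(|y_1|)$. Boundedness then follows from monotonicity together with symmetry: for every $y$, $f(y)=f(|y|)\le f(0)=c_\nu$, so one may take $C=c_\nu$.

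It remains to show that $f$ is regularly varying in the sense of item (i). I would set $\alpha := \nu > 0$ and
\[
 C_f := c_\nu\, \nu^{(\nu+1)/2} = \frac{\Gamma\!\left(\tfrac{\nu+1}{2}\right)\,\nu^{\nu/2}}{\sqrt{\pi}\,\Gamma\!\left(\tfrac{\nu}{2}\right)} > 0 .
\]
Then for $y\neq 0$ one computes
\[
 \frac{f(y)}{C_f\,|y|^{-(\alpha+1)}}
 = \frac{c_\nu\bigl(1 + y^2/\nu\bigr)^{-(\nu+1)/2}}{c_\nu\,\nu^{(\nu+1)/2}\,|y|^{-(\nu+1)}}
 = \left(\frac{1 + y^2/\nu}{y^2/\nu}\right)^{-(\nu+1)/2}
 = \left(1 + \frac{\nu}{y^2}\right)^{-(\nu+1)/2}.
\]
Letting $y\to\pm\infty$, the bracket tends to $1$, and since $t\mapsto t^{-(\nu+1)/2}$ is continuous at $t=1$, the ratio tends to $1$, which is exactly the limit required in item (i). Hence \autoref{ass:PDF} holds with this choice of $\alpha$ and $C_f$.

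I do not anticipate any genuine obstacle here: every step is an elementary manipulation. If there is a ``main'' step, it is the asymptotic equivalence in the last display, where the only thing to be careful about is rewriting $(1+y^2/\nu)^{-(\nu+1)/2}$ relative to the pure power $|y|^{-(\nu+1)}$ and checking that the leftover factor $\bigl(1+\nu/y^2\bigr)^{-(\nu+1)/2}$ converges to $1$; this is where the precise value of $C_f$ (namely the normalizing constant times $\nu^{(\nu+1)/2}$) comes from. One could alternatively phrase this via $\log f(y) = \log c_\nu - \tfrac{\nu+1}{2}\log(1+y^2/\nu)$ and a Taylor expansion of the logarithm, but the direct ratio computation above is the cleanest.
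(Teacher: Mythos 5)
Your proof is correct and follows the same route as the paper, which simply asserts the first part of \autoref{ass:PDF} is straightforward and that regular variation holds with $\alpha = \nu$ and $C_f = \Gamma\!\left(\tfrac{\nu+1}{2}\right)\nu^{\nu/2}/\bigl(\sqrt{\pi}\,\Gamma\!\left(\tfrac{\nu}{2}\right)\bigr)$ — exactly the constants you derive. You merely spell out the elementary verifications the paper leaves to the reader.
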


\begin{proof}
 It is straightforward to verify the first part of \autoref{ass:PDF}. It can be readily verified that $f$ is regularly varying using
 \[
  C_f = \frac{\Gamma\left(\frac{\nu + 1}{2}\right) \nu^{\nu/2}}{\sqrt{\pi} \, \Gamma\left(\frac{\nu}{2}\right)}
 \]
 and $\alpha = \nu$.
\end{proof}

The notion of log-regularly varying functions appeared more recently in \cite{desgagne2015robustness} in the context of robustness against outliers to achieve what is referred to as \textit{whole robustness} for the location--scale model (we will return to the concept of whole robustness in \autoref{sec:result}). As for regularly varying functions, we made an abuse of terminology in \autoref{ass:PDF} as the formal definition of a log-regularly varying function is slightly more general. We will nevertheless carry on with this abuse of terminology. An example of log-regularly varying PDFs is the \textit{log-Pareto-tailed normal} (LPTN). The central part of this continuous PDF coincides with the standard normal and the tails are log-Pareto, hence its name. It has an hyperparameter $\rho \in (2\Phi(1) - 1, 1) \approx (0.6827, 1)$ and is given by
\begin{align*}
f(y)=\left\{
\begin{array}{lcc}
                                                      \varphi(y)  & \text{ if } & |y|\leq \vartheta, \cr
                                                      \varphi(\vartheta)\,\frac{\vartheta}{|y|}\left(\frac{\log \vartheta}{\log |y|}\right)^{\lambda + 1} & \text{ if } & |y| > \vartheta, \cr
\end{array}
\right.
\end{align*}
where $\vartheta > 1$ and $\lambda > 0$ are functions of $\rho$ with
\begin{align*}
 & \vartheta=\Phi^{-1}((1+\rho)/2) = \{\vartheta : \Prob(-\vartheta \leq Z \leq \vartheta) = \rho \,\text{ for }\, Z\, \sim \, \mathcal{N}(0,1)\}, \cr
 & \lambda = 2(1-\rho)^{-1}\varphi(\vartheta) \, \vartheta \log(\vartheta),
 \end{align*}
$\varphi$ and $\Phi$ being the PDF and cumulative distribution function of a standard normal distribution, respectively.

\begin{Proposition}\label{prop:LPTN}
   Let $f$ be a LPTN PDF with $\rho \in (2\Phi(1) - 1, 1)$. Then, \autoref{ass:PDF} is satisfied.
\end{Proposition}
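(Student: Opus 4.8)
The plan is to verify the two parts of \autoref{ass:PDF} separately, exploiting that the LPTN PDF is defined piecewise and that, crucially, its tail piece is already written in exactly the form demanded by condition (ii).

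First I would check the regularity conditions. Positivity is immediate: on $\{|y| \le \vartheta\}$ we have $f = \varphi > 0$, and on $\{|y| > \vartheta\}$ every factor ($\varphi(\vartheta)$, $\vartheta/|y|$, $\log\vartheta/\log|y|$) is positive since $\vartheta > 1$. Symmetry holds because each piece depends on $y$ only through $|y|$ (and $\varphi$ is even). For monotonicity, $\varphi(|y|)$ is decreasing in $|y|$ on $[0,\vartheta]$, while on $(\vartheta,\infty)$ the tail piece is a positive constant times $|y|^{-1}(\log|y|)^{-(\lambda+1)}$, a product of two positive decreasing functions of $|y|$ (using $|y| > \vartheta > 1$); it then remains only to note that the two pieces agree at $|y| = \vartheta$, where the tail expression collapses to $\varphi(\vartheta)$, so $f$ is continuous and non-increasing in $|y|$ throughout. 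Boundedness follows from $f(y) \le f(0) = \varphi(0) = 1/\sqrt{2\pi}$, so one may take $C = 1/\sqrt{2\pi}$.

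Next I would establish condition (ii). For $|y| > \vartheta$, rearranging the definition gives
\[
 f(y) = \bigl(\varphi(\vartheta)\,\vartheta\,(\log\vartheta)^{\lambda+1}\bigr)\, |y|^{-1}(\log|y|)^{-(\lambda+1)},
\]
which is exactly of the form $C_f\,|y|^{-1}(\log|y|)^{-(\alpha+1)}$ with $C_f := \varphi(\vartheta)\,\vartheta\,(\log\vartheta)^{\lambda+1} > 0$ and $\alpha := \lambda > 0$. Hence the ratio appearing in (ii) equals $1$ for every $|y| > \vartheta$, and in particular its limit as $y \to \pm\infty$ is $1$.

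There is essentially no hard step here; the only bookkeeping worth spelling out is that the constants $\vartheta$ and $\lambda$ are well defined and have the claimed signs for every admissible $\rho$. Since $\rho > 2\Phi(1) - 1$ forces $(1+\rho)/2 > \Phi(1)$ and hence $\vartheta = \Phi^{-1}((1+\rho)/2) > 1$, and since $\rho < 1$ makes $\lambda = 2(1-\rho)^{-1}\varphi(\vartheta)\,\vartheta\log\vartheta$ a product of strictly positive quantities, all the expressions above are legitimate, which completes the verification.
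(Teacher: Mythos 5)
Your proof is correct and follows essentially the same route as the paper, which verifies the regularity conditions directly and identifies the tail as log-regularly varying with $C_f = \varphi(\vartheta)\,\vartheta\,(\log\vartheta)^{\lambda+1}$ and $\alpha = \lambda$ --- exactly your constants. You simply spell out the details (continuity at $|y| = \vartheta$, the bound $C = 1/\sqrt{2\pi}$, and the signs of $\vartheta - 1$ and $\lambda$) that the paper leaves as ``straightforward''.
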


\begin{proof}
 It is straightforward to verify the first part of \autoref{ass:PDF}. It can be readily verified that $f$ is log-regularly varying using
 \[
  C_f = \varphi(\vartheta) \, \vartheta \, (\log \vartheta)^{\lambda + 1}
 \]
 and $\alpha = \lambda$.
\end{proof}

We now present the assumptions on the prior distribution.

\begin{Assumption}\label{ass:prior}
   The prior distribution is such that: $\bbeta$ given $\sigma$ has a normal distribution with a mean of $\mathbf{0}$ and a covariance matrix of $\sigma^2 \I_p$, where $\I_p$ is the identity matrix of size $p$, and $\sigma^2$ has an inverse-gamma distribution with any shape and scale parameters.
\end{Assumption}

As mentioned in \autoref{sec:intro}, this prior distribution is commonly used in Bayesian linear regression (see, e.g., \cite{raftery1997bayesian}). As also mentioned in \autoref{sec:intro}, there is a focus in the paper on this commonly used prior distribution as it is associated to a natural framework. In \autoref{sec:alternativeprior}, we show that our robustness characterization result holds for an important class of prior distributions, with essentially the same simple proof. The alternative to \autoref{ass:prior} considered is to assume that $\beta_1, \ldots, \beta_p$ and $\sigma$ are all independent, to simplify, and that each $\beta_j$ has a sub-exponential distribution (not necessarily with a mean of 0, not necessarily the same distribution for all $j$) and a probability distribution for $\sigma^2$ having finite inverse moments. The exponential decay of the regression coefficient prior PDF in \autoref{ass:prior} and \autoref{sec:alternativeprior} allows to exploit tail bounds. In \cite{gagnon2020} for instance, the class of prior distributions considered allows the decay to be much slower, but it prevents to exploit a tail bound, making the proof significantly longer and more technical.

With a (conditional) normal distribution on $\bbeta$ as in \autoref{ass:prior} (or a sub-exponential distribution as in \autoref{sec:alternativeprior}), one has to be careful with the potential conflict between the prior information and that carried by the data \citep{10.1214/22-BA1330}. Note that this is true also for the prior distribution on $\sigma^2$ in \autoref{ass:prior} given that the inverse-gamma PDF has a thin left tail. Ideally, the scale parameter of the inverse-gamma would be of the same order of magnitude as $\sigma^2$ to mitigate the risk of conflicting prior information. A small value for the shape parameter makes the inverse-gamma PDF relatively flat and thus yields a (joint) prior distribution that is as weakly informative as possible for the class of prior distributions in \autoref{ass:prior}.

\section{Robustness characterization result}\label{sec:result}

To characterize the robustness of the model in \eqref{eqn:linear_reg} (depending on $f$), we study it under an asymptotic framework where the outliers move further and further away from the bulk of the data. We mathematically represent this asymptotic framework by considering that the outliers move along particular paths (as in, e.g., \cite{gagnon2020} and \cite{hamura2020log}). The mathematical representation allows for a general definition of outliers, that is couples $(\bx_i, y_i)$ whose components are incompatible with the trend in the bulk of the data. Let us consider for example that there is an element in $\bx_i$ that makes the combination of $\bx_i$ with $y_i$ incompatible. Equivalently, in this example, we can consider that, compared with the trend in the bulk of the data, the value of $y_i$ is either too small or too large for this $\bx_i$. We can thus allow for this general definition of outliers by considering an asymptotic framework where the vectors $\bx_i$ are fixed (but potentially extreme) and the observations $y_i$ are such that
\[
 |y_i| = a_i + b_i \omega, \quad i =1, \ldots, n,
\]
with $a_i > 0$ a constant, $b_i = 0$ if the data point is a non-outlier and $b_i \geq 1$ if it is an outlier, and then we let $\omega \rightarrow \infty$.

Under such an asymptotic framework, we obtain a sequence of posterior distributions, indexed by $\omega$, and we want to understand what a posterior distribution in this sequence looks like when $\omega$ is large. Note that this asymptotic framework is not in contradiction with the assumption in \autoref{sec:context} that $y_1, \ldots, y_n$ are realizations of the random variables $Y_1, \ldots, Y_n$ with the model in \eqref{eqn:linear_reg}. Indeed, all observation values $y_1, \ldots, y_n$ are possible under this model, but they become less probable as the values become more extreme. The asymptotic framework thus allows to study how the posterior distribution behaves when some observations (the outlying observations) become more and more extreme.

We will prove a theoretical asymptotic result characterizing the limiting posterior distribution which implies that, for the outlying data points with $\bx_i$ fixed (but potentially extreme), there exist large enough values for $|y_i| = a_i + b_i \omega$ such that the associated posterior distribution is similar to the limiting one. The location of the point $(\bx_i, y_i)$ has an impact on how large $|y_i|$ needs to be; for instance, it needs to be larger when $\bx_i$ is extreme, justifying the use of different $a_i$ and $b_i$ for the different points. For a real data set with (fixed) outliers, the goal of this mathematical representation is to be able to choose values for all $a_i$ and $b_i$ and a value for $\omega$ so that this data set is obtained.

We now present definitions that will allow to state the robustness characterization result. Let us define the index set of outlying data points by: $\Oset := \{i: b_i \geq 1\}$. The index set of non-outlying data points is thus given by: $\Oset^\mathsf{c} := \{1, \ldots, n\} \setminus \Oset$. We also define the set of non-outlying observations: $\by_{\Oset^\mathsf{c}} := \{y_i: i \in \Oset^\mathsf{c}\}$. Let us denote by $\pi$ the prior distribution of $(\bbeta, \sigma)$. We consider that it is not in conflict with the trend in the bulk of the data to focus on robustness against outliers; see \cite{10.1214/22-BA1330} for a study of robustness of heavy-tailed prior distributions against conflicting prior information in regression. Let us denote by $\pi_\omega(\, \cdot \,, \cdot \mid \by)$ a posterior distribution in the sequence indexed by $\omega$, with a posterior density, denoted by $\pi_\omega(\, \cdot \,, \cdot \mid \by)$ as well to simplify, which is such that
\begin{align}\label{eqn:post}
 \pi_\omega(\bbeta, \sigma \mid \mathbf{y}) = \pi(\bbeta, \sigma) \left[\prod_{i = 1}^n (1 / \sigma) f((y_i - \bx_i^T \bbeta) / \sigma)\right] \Bigg/ m_\omega(\by), \quad \bbeta \in \re^p, \sigma > 0,
\end{align}
where $\by = (y_1, \ldots, y_n)^T$ and
\begin{align}\label{eqn:constant}
 m_\omega(\by) =  \int_{\re^p} \int_0^\infty \pi(\bbeta, \sigma) \left[\prod_{i = 1}^n (1 / \sigma) f((y_i - \bx_i^T \bbeta) / \sigma)\right] \, \d\sigma \, \d\bbeta,
\end{align}
if $m_\omega(\by) < \infty$, a situation where the posterior distribution is proper and thus well defined.

From \eqref{eqn:post}, we understand that the limiting behaviour of the (conditional) PDF of $Y_i$ evaluated at an outlying point is central to the characterization of the robustness properties of a robust alternative. We now present a proposition about this limiting behaviour.

\begin{Proposition}\label{prop:limit_PDF}
   Suppose that \autoref{ass:PDF} holds. For all $\bbeta \in \re^p$ and $\sigma > 0$,
    \[
     \lim_{y_i \rightarrow \pm\infty} \frac{(1 / \sigma) f((y_i - \bx_i^T \bbeta) / \sigma)}{g(\sigma) f(y_i)} = 1,
    \]
    where $g(\sigma) = \sigma^\alpha$ if $f$ is a regularly varying function or $g(\sigma) = 1$ if $f$ is a log-regularly varying function.
\end{Proposition}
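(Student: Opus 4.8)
The plan is to fix $\bbeta \in \re^p$ and $\sigma > 0$, abbreviate $c := \bx_i^T \bbeta$, and note that since $\sigma$ is a fixed positive constant, $(y_i - c)/\sigma \to \pm\infty$ as $y_i \to \pm\infty$. The proof then amounts to a bookkeeping exercise: I would write the ratio appearing in the statement as a product of three factors, the first two of which tend to $1$ by \autoref{ass:PDF} (applied once with argument $y_i$ and once with argument $(y_i - c)/\sigma$, which is legitimate because the latter diverges), and the third of which is an explicit elementary expression verified to tend to $1$ by hand. The symmetry of $f$ assumed in \autoref{ass:PDF} lets me treat the limits $y_i \to +\infty$ and $y_i \to -\infty$ on the same footing.

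In case (i) ($f$ regularly varying, $g(\sigma) = \sigma^\alpha$), I would write
\[
\frac{(1/\sigma) f((y_i - c)/\sigma)}{\sigma^\alpha f(y_i)} = \underbrace{\frac{f((y_i-c)/\sigma)}{C_f \, |(y_i-c)/\sigma|^{-(\alpha+1)}}}_{\to\, 1} \cdot \underbrace{\frac{C_f \, |y_i|^{-(\alpha+1)}}{f(y_i)}}_{\to\, 1} \cdot \frac{(1/\sigma)\,|(y_i-c)/\sigma|^{-(\alpha+1)}}{\sigma^\alpha |y_i|^{-(\alpha+1)}}.
\]
The first factor tends to $1$ by \autoref{ass:PDF}(i) since its argument diverges, and the second by \autoref{ass:PDF}(i) directly. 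Using $|(y_i-c)/\sigma| = |y_i - c|/\sigma$, the third factor simplifies to $(|y_i|/|y_i - c|)^{\alpha+1}$, which tends to $1$ because $|y_i - c|/|y_i| \to 1$. Multiplying the three limits gives the claim.

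In case (ii) ($f$ log-regularly varying, $g(\sigma) = 1$), I would use the identical three-factor decomposition, now with normalizing profile $C_f |y|^{-1}(\log|y|)^{-(\alpha+1)}$ in place of $C_f |y|^{-(\alpha+1)}$; the first two factors again tend to $1$ by \autoref{ass:PDF}(ii). The third factor simplifies to
\[
\frac{|y_i|}{|y_i - c|}\left(\frac{\log|y_i|}{\log(|y_i - c|/\sigma)}\right)^{\alpha+1},
\]
and I would finish by noting $|y_i|/|y_i-c| \to 1$ and, writing $\log(|y_i-c|/\sigma) = \log|y_i| + \log(|y_i-c|/|y_i|) - \log\sigma$, that the ratio of logarithms tends to $1$ since $\log(|y_i-c|/|y_i|) \to 0$ while $\log\sigma$ is a fixed constant and $\log|y_i| \to \infty$.

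I do not expect a serious obstacle: the statement is a clean consequence of (log-)regular variation being preserved under the affine substitution $y_i \mapsto (y_i - c)/\sigma$ together with the scaling of the two normalizing profiles. The only step requiring a line of care is the logarithmic bookkeeping in case (ii), namely verifying $\log(|y_i-c|/\sigma)/\log|y_i| \to 1$; once that is settled, the two cases are structurally identical.
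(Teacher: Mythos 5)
Your proposal is correct and follows essentially the same route as the paper: the paper's proof of \autoref{prop:limit_PDF} uses exactly this three-factor decomposition, with the first two factors converging to $1$ by \autoref{ass:PDF} (applied at $y_i$ and at the diverging argument $(y_i - \bx_i^T\bbeta)/\sigma$) and the third an explicit elementary ratio whose limit is $\sigma^\alpha$ or $1$ depending on the case. Your extra logarithmic bookkeeping in case (ii) just makes explicit a cancellation the paper leaves to the reader.
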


\begin{proof}
 Let us first consider the case where $f$ is regularly varying. For all $\bbeta \in \re^p$ and $\sigma > 0$,
     \begin{align*}
     &\lim_{y_i \rightarrow \pm\infty} \frac{(1 / \sigma) f((y_i - \bx_i^T \bbeta) / \sigma)}{f(y_i)}\cr
      &\quad =  \lim_{y_i \rightarrow \pm\infty} \frac{f((y_i - \bx_i^T \bbeta) / \sigma)}{C_f \, |(y_i - \bx_i^T \bbeta) / \sigma|^{-(\alpha + 1)}}\frac{C_f \, |y_i|^{-(\alpha + 1)}}{f(y_i)} \frac{(1 / \sigma) C_f \, |(y_i - \bx_i^T \bbeta) / \sigma|^{-(\alpha + 1)}}{C_f \, |y_i|^{-(\alpha + 1)}} = \sigma^\alpha.
    \end{align*}

    Let us now consider the case where $f$ is log-regularly varying. For all $\bbeta \in \re^p$ and $\sigma > 0$,
     \begin{align*}
     &\lim_{y_i \rightarrow \pm\infty} \frac{(1 / \sigma) f((y_i - \bx_i^T \bbeta) / \sigma)}{f(y_i)} \cr
      &\quad=  \lim_{y_i \rightarrow \pm\infty} \frac{f((y_i - \bx_i^T \bbeta) / \sigma)}{C_f \, |(y_i - \bx_i^T \bbeta) / \sigma|^{-1}(\log|(y_i - \bx_i^T \bbeta) / \sigma|)^{-(\alpha + 1)}}\frac{C_f \, |y_i|^{-1}(\log|y_i|)^{-(\alpha + 1)}}{f(y_i)} \cr
      &\hspace{80mm} \times \frac{ C_f \, |y_i - \bx_i^T \bbeta|^{-1}(\log|(y_i - \bx_i^T \bbeta) / \sigma|)^{-(\alpha + 1)}}{C_f \, |y_i|^{-1}(\log|y_i|)^{-(\alpha + 1)}} = 1.
    \end{align*}
\end{proof}

Under \autoref{ass:PDF}, we thus expect the PDF term of each outlier in \eqref{eqn:post} to behave like $g(\sigma) f(y_i) \propto g(\sigma)$ asymptotically. The result that we present below is specifically about this. We prove convergence of the posterior distribution towards $\pi(\, \cdot \,, \cdot \mid \by_{\Oset^\mathsf{c}})$ which is such that
\[
 \pi(\bbeta, \sigma \mid \mathbf{y}_{\Oset^\mathsf{c}}) = \pi(\boldsymbol\beta, \sigma) \, g(\sigma)^{|\Oset|} \left[\prod_{i \in \Oset^\mathsf{c}} (1 / \sigma) f((y_i - \mathbf{x}_i^T \boldsymbol\beta) / \sigma)\right] \Bigg/ m(\mathbf{y}_{\Oset^\mathsf{c}}), \quad \boldsymbol\beta \in \re^p, \sigma > 0,
\]
where $|\Oset|$ is the cardinality of the set $\Oset$, that is the number of outliers, and
\begin{align}\label{eqn:constant_wo}
 m(\mathbf{y}_{\Oset^\mathsf{c}}) =  \int_{\re^p} \int_0^\infty \pi(\boldsymbol\beta, \sigma) \, g(\sigma)^{|\Oset|}  \left[\prod_{i \in \Oset^\mathsf{c}} (1 / \sigma) f((y_i - \mathbf{x}_i^T \boldsymbol\beta) / \sigma)\right] \, \d\sigma \, \d\boldsymbol\beta,
\end{align}
if $m(\mathbf{y}_{\Oset^\mathsf{c}}) < \infty$, a situation where the limiting posterior distribution is proper and thus well defined. Note that we abused notation by writing, for instance, $\pi(\, \cdot \,, \cdot \mid \by_{\Oset^\mathsf{c}})$ as the latter is not the conditional distribution given only the non-outliers in the case where $f$ is regularly varying; there is an additional term, $g(\sigma)^{|\Oset|} = \sigma^{|\Oset| \alpha}$, in the definitions above.

When $f$ is log-regularly varying, there is asymptotically no trace of the outliers in the posterior distribution as $g(\sigma) = 1$. The robust alternative thus acts automatically like practitioners would and excludes the outliers when they are far enough from the bulk of the data and there is no doubt as to whether they really are outliers. Such a robust alternative is said to achieve whole robustness. When $f$ is regularly varying, there is asymptotically a trace of the outliers in the posterior distribution, namely $g(\sigma) = \sigma^\alpha$ for each outlier. It is nevertheless possible to obtain a limit which, by definition, does not depend on $\omega$, the latter representing in a sense the source of outlyingness. Such a robust alternative is thus said to achieve \textit{partial robustness}. Note that, typically, for both cases of regularly and log-regularly varying models, the impact of observations gradually diminish when they are artificially moved away from the bulk of the data, as observed in \autoref{fig:mean}. Moderately far observations thus have a certain influence, reflecting uncertainty about the nature of these observations in a grey zone (outliers versus non-outliers).

Based on these characteristics of regularly varying and log-regularly varying PDFs, a recommendation is to use a linear regression model with a log-regularly varying PDF on the errors given its whole robustness property (see \cite{gagnon2020} for a detailed treatment of the subject). A LPTN distribution can for instance be assumed as the error distribution. A disadvantage of the LPTN PDF is that, while being equal to the normal PDF in the area where the mass concentrates and thus globally similar to the normal PDF, it is not smooth (its first derivative is not continuous). This may make less efficient typical MCMC methods. This disadvantage is not shared by the Student's $t$ distribution, which can be additionally represented as a scale mixture of normal distributions. A Gibbs sampler \citep{geman1984stochastic} can thus be implemented, leading to a simplified computational procedure. It has also been observed that the difference in robustness with the LPTN linear regression model is not significant in some situations when the degrees of freedom of the Student's $t$ distribution are small, say $\nu = \alpha = 4$ (see, e.g., \cite{gagnon2020}). A user can thus weigh the pros and cons and take an informed decision regarding which robust alternative to use.

In order to state the robustness characterization result, we need a guarantee that all posterior distributions are well defined ($\pi_\omega(\, \cdot \,, \cdot \mid \by)$ and $\pi(\, \cdot \,, \cdot \mid \by_{\Oset^\mathsf{c}})$). We present an assumption on the number of outliers $|\Oset|$, or equivalently the number of non-outliers $|\Oset^\mathsf{c}| = n - |\Oset|$, that allows such a guarantee.

\begin{Assumption}\label{ass:sample_size}
   In the case where $f$ is regularly varying, the assumption is that $|\Oset^\mathsf{c}| > \alpha |\Oset| \Leftrightarrow |\Oset| / n < 1 / (\alpha + 1)$. In the case where $f$ is log-regularly varying, the assumption is that $|\Oset^\mathsf{c}| \geq |\Oset| \Leftrightarrow |\Oset| / n \leq 1 / 2$.
\end{Assumption}

\begin{Proposition}\label{prop:proper}
   Suppose that Assumptions \ref{ass:PDF}, \ref{ass:prior} and \ref{ass:sample_size} hold.  Then, $m(\mathbf{y}_{\Oset^\mathsf{c}}) < \infty$ and $m_\omega(\by) < \infty$ for all $\omega$.
\end{Proposition}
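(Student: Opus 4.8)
The plan is to reduce both integrals to a one-dimensional integral in $\sigma$ using only two facts: under \autoref{ass:prior} the prior factorizes as $\pi(\bbeta, \sigma) = \pi(\bbeta \mid \sigma)\,\pi(\sigma)$ with $\pi(\bbeta \mid \sigma)$ a proper (conditional normal) density, so that $\int_{\re^p}\pi(\bbeta\mid\sigma)\,\d\bbeta = 1$; and under \autoref{ass:PDF} the error density is bounded, $f \leq C$. The marginal prior on $\sigma$ induced by an inverse-gamma law on $\sigma^2$ is, up to a multiplicative constant, $\pi(\sigma) \propto \sigma^{-2\kappa - 1}\,\e^{-\tau/\sigma^2}$ on $(0,\infty)$, where $\kappa > 0$ is the shape and $\tau > 0$ the scale. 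Since every integrand is nonnegative, Tonelli's theorem lets me carry out the $\bbeta$-integral first.

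For $m_\omega(\by)$, I would bound every likelihood factor crudely: $(1/\sigma)\,f((y_i - \bx_i^T\bbeta)/\sigma) \leq C/\sigma$, uniformly in $\bbeta$ and in $\omega$. Integrating $\bbeta$ out then gives
\[
 m_\omega(\by) \;\leq\; C^n \int_0^\infty \pi(\sigma)\,\sigma^{-n}\,\d\sigma,
\]
and after the substitution $t = 1/\sigma^2$ the right-hand side is a constant multiple of the gamma integral $\int_0^\infty t^{(n + 2\kappa)/2 - 1}\e^{-\tau t}\,\d t = \tau^{-(n+2\kappa)/2}\,\Gamma((n+2\kappa)/2)$, which is finite because $n + 2\kappa > 0$. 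As this bound does not involve $\omega$, we get $m_\omega(\by) < \infty$ for all $\omega$.

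For $m(\by_{\Oset^\mathsf{c}})$, the outlier positions have already been replaced by the factor $g(\sigma)^{|\Oset|}$ in the definition, so only the $|\Oset^\mathsf{c}|$ non-outlier terms still carry $f$; I would bound those by $C/\sigma$ as above and integrate $\bbeta$ out to obtain
\[
 m(\by_{\Oset^\mathsf{c}}) \;\leq\; C^{|\Oset^\mathsf{c}|}\int_0^\infty \pi(\sigma)\,g(\sigma)^{|\Oset|}\,\sigma^{-|\Oset^\mathsf{c}|}\,\d\sigma .
\]
With $g(\sigma) = \sigma^\alpha$ in the regularly varying case or $g(\sigma) = 1$ in the log-regularly varying case, and $\pi(\sigma)\propto \sigma^{-2\kappa-1}\e^{-\tau/\sigma^2}$, the integrand behaves as $\sigma \to \infty$ like $\sigma^{\alpha|\Oset| - |\Oset^\mathsf{c}| - 2\kappa - 1}$, respectively $\sigma^{-|\Oset^\mathsf{c}| - 2\kappa - 1}$; this is integrable at infinity iff $\alpha|\Oset| - |\Oset^\mathsf{c}| < 2\kappa$, respectively $-|\Oset^\mathsf{c}| < 2\kappa$. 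The second inequality always holds, and the first is supplied by \autoref{ass:sample_size}, which forces $\alpha|\Oset| - |\Oset^\mathsf{c}| < 0 < 2\kappa$. Near $\sigma = 0$ there is nothing to check: the factor $\e^{-\tau/\sigma^2}$ dominates every negative power of $\sigma$, so the integrand is bounded there. Hence the $\sigma$-integral converges and $m(\by_{\Oset^\mathsf{c}}) < \infty$.

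\textbf{Expected difficulty.} I do not expect a genuine obstacle: the content of the proposition is simply that boundedness of $f$ together with a proper prior whose $\sigma$-marginal has all negative moments (which here is automatic from the $\e^{-\tau/\sigma^2}$ factor of the inverse-gamma) already makes both normalizing constants finite, with \autoref{ass:sample_size} providing exactly the margin that keeps the limiting constant $m(\by_{\Oset^\mathsf{c}})$ finite in the regularly varying case. The only mildly delicate bookkeeping is that $\alpha|\Oset|$ need not be an integer, so one tracks a general real power of $\sigma$; the degenerate case $n = 0$ is trivial.
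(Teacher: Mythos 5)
Your proposal is correct and follows essentially the same route as the paper: bound each likelihood factor by $C/\sigma$ using the boundedness of $f$, integrate out $\bbeta$ via the proper conditional prior, and reduce to a negative moment of the inverse-gamma, which is finite (the paper cites $\E[(\sigma^2)^{-\kappa}]<\infty$ directly, while you verify the gamma integral explicitly, with \autoref{ass:sample_size} ensuring the exponent $|\Oset^\mathsf{c}|-\alpha|\Oset|$ is positive in the regularly varying case). The only difference is cosmetic bookkeeping, not substance.
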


\begin{proof}
 We prove the result for the case where $f$ is regularly varying; the proof for the case where $f$ is log-regularly varying is similar. When $f$ is regularly varying,
\begin{align*}
 m(\mathbf{y}_{\Oset^\mathsf{c}}) &=  \int_{\re^p} \int_0^\infty \pi(\boldsymbol\beta, \sigma) \, \sigma^{\alpha|\Oset|}  \left[\prod_{i \in \Oset^\mathsf{c}} (1 / \sigma) f((y_i - \mathbf{x}_i^T \boldsymbol\beta) / \sigma)\right] \, \d\sigma \, \d\boldsymbol\beta \cr
 &\leq C^{|\Oset^\mathsf{c}|} \int_{\re^p} \int_0^\infty \pi(\boldsymbol\beta, \sigma) \, \sigma^{\alpha|\Oset| - |\Oset^\mathsf{c}|}  \, \d\sigma \, \d\boldsymbol\beta = C^{|\Oset^\mathsf{c}|} \E[\sigma^{-(|\Oset^\mathsf{c}| - \alpha|\Oset|)}] < \infty,
\end{align*}
using that $f \leq C$ (\autoref{ass:PDF}) in the first inequality and, in the final inequality, that $\E[(\sigma^2)^{-\kappa}] < \infty$ for any $\kappa > 0$ when $\sigma^2$ has an inverse-gamma distribution (\autoref{ass:prior}), given that $|\Oset^\mathsf{c}| > \alpha |\Oset|$ (\autoref{ass:sample_size}).

Also,
\begin{align*}
   m_\omega(\by) &=  \int_{\re^p} \int_0^\infty \pi(\bbeta, \sigma) \left[\prod_{i = 1}^n (1 / \sigma) f((y_i - \bx_i^T \bbeta) / \sigma)\right] \, \d\sigma \, \d\bbeta \cr
   &\leq C^n \int_{\re^p} \int_0^\infty \pi(\bbeta, \sigma) \, \sigma^{-n} \, \d\sigma \, \d\bbeta < \infty,
\end{align*}
using, similarly, \autoref{ass:PDF} in the first inequality and \autoref{ass:prior} in the final one.
\end{proof}

Note that the notion of linear regression is not used in the proof of \autoref{prop:proper}; the proof is valid for any model as long as $f$, the conditional PDF of $Y_i$, is bounded and the prior distribution satisfies regularity conditions. Note also that the result of \autoref{prop:proper} can actually be obtained without \autoref{ass:sample_size} in the case where $f$ is log-regularly varying. \autoref{ass:sample_size} is, in this case, used for the robustness characterization result. We now present this result.

\begin{Theorem}\label{Thm}
Suppose that Assumptions \ref{ass:PDF}, \ref{ass:prior} and \ref{ass:sample_size} hold. As $\omega \rightarrow \infty$,
 \begin{description}
   \item[(a)] the asymptotic behaviour of the marginal distribution is: $m_\omega(\by) / \prod_{i \in \Oset}f(y_i)\rightarrow m(\by_{\Oset^\mathsf{c}})$;

     \item[(b)] the posterior density converges pointwise: for any $\bbeta \in \re^p$ and $\sigma > 0$, $\pi_\omega(\bbeta, \sigma \mid \by) \rightarrow \pi(\bbeta, \sigma \mid \by_{\Oset^\mathsf{c}})$;

\item[(c)] the posterior distribution converges: $\pi_\omega(\, \cdot \,, \, \cdot \mid \by) \rightarrow \pi(\, \cdot \,, \, \cdot \mid \by_{\Oset^\mathsf{c}})$.
\end{description}
\end{Theorem}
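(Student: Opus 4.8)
The plan is to reduce the whole statement to part~(a) and then concentrate the effort there. Given~(a), part~(b) follows by dividing the numerator and denominator of \eqref{eqn:post} by $\prod_{i \in \Oset} f(y_i) > 0$, which gives, for fixed $\bbeta \in \re^p$ and $\sigma > 0$,
\[
 \pi_\omega(\bbeta, \sigma \mid \by) = \frac{\pi(\bbeta, \sigma) \left[\prod_{i \in \Oset^\mathsf{c}} (1/\sigma) f((y_i - \bx_i^T \bbeta)/\sigma)\right] \prod_{i \in \Oset} \dfrac{(1/\sigma) f((y_i - \bx_i^T \bbeta)/\sigma)}{f(y_i)}}{m_\omega(\by) \big/ \prod_{i \in \Oset} f(y_i)} ;
\]
as $\omega \to \infty$ each outlier has $|y_i| \to \infty$, so \autoref{prop:limit_PDF} makes each factor in the last product of the numerator tend to $g(\sigma)$ (the non-outlier factors and the prior not depending on $\omega$), while part~(a) handles the denominator; hence the right-hand side converges to $\pi(\bbeta, \sigma \mid \by_{\Oset^\mathsf{c}})$. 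Part~(c) is then immediate from Scheffé's lemma: the pointwise limit $\pi(\,\cdot\,, \cdot \mid \by_{\Oset^\mathsf{c}})$ is a probability density (it integrates to $1$ since $m(\by_{\Oset^\mathsf{c}}) < \infty$ by \autoref{prop:proper}), and pointwise convergence of probability densities to a probability density implies convergence in total variation, hence of the associated distributions.

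For part~(a), I would write $m_\omega(\by)/\prod_{i \in \Oset} f(y_i)$ as
\[
 \int_{\re^p} \int_0^\infty \pi(\bbeta, \sigma) \left[\prod_{i \in \Oset^\mathsf{c}} \frac{f((y_i - \bx_i^T \bbeta)/\sigma)}{\sigma}\right] \prod_{i \in \Oset} \frac{(1/\sigma) f((y_i - \bx_i^T \bbeta)/\sigma)}{f(y_i)} \, \d\sigma \, \d\bbeta
\]
and split the $\bbeta$-integral over $G_\omega := \{\bbeta : |\bx_i^T \bbeta| \leq \omega/2 \text{ for every } i \in \Oset\}$ and its complement. On $G_\omega$ each outlier satisfies $|y_i - \bx_i^T \bbeta| \geq |y_i| - \omega/2 > \omega/2$ (using $|y_i| = a_i + b_i \omega > \omega$), so by \autoref{prop:limit_PDF} the product over $\Oset$ converges pointwise to $g(\sigma)^{|\Oset|}$ and, since $\ind_{G_\omega}(\bbeta) \to 1$ for every fixed $\bbeta$, it suffices to dominate the integrand on $G_\omega$ by an $\omega$-free integrable function. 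Combining $f \leq C$ with the tail equivalence in \autoref{ass:PDF}(i) --- which yields $(C_f/2)|y|^{-(\alpha+1)} \leq f(y) \leq 2C_f |y|^{-(\alpha+1)}$ for $|y|$ large --- and the bounds $|y_i| \leq a_i + b_i \omega$, $|y_i - \bx_i^T \bbeta| \geq \omega/2$, a short case analysis (according to whether $|y_i - \bx_i^T \bbeta|/\sigma$ is bounded below, i.e.\ whether $\sigma$ is moderate or of order $\omega$) bounds each outlier factor by a fixed multiple of $\sigma^\alpha$ uniformly over $G_\omega$ and large $\omega$; hence the integrand is dominated on $G_\omega$ by $K \pi(\bbeta, \sigma) \sigma^{\alpha|\Oset|} \prod_{i \in \Oset^\mathsf{c}} (1/\sigma) f((y_i - \bx_i^T \bbeta)/\sigma)$, whose integral is at most $K C^{|\Oset^\mathsf{c}|}\, \E[\sigma^{-(|\Oset^\mathsf{c}| - \alpha|\Oset|)}] < \infty$ by Assumptions \ref{ass:prior} and \ref{ass:sample_size}. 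Dominated convergence then gives that the $G_\omega$-part tends to $m(\by_{\Oset^\mathsf{c}})$. The log-regularly varying case runs identically with $g(\sigma) = 1$, the case analysis now producing a bound by a fixed power of $\log(\e + \sigma)$, still integrable against the prior.

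It then remains to show that the $G_\omega^\mathsf{c}$-part vanishes, and this is the step I expect to be the main obstacle and the one where the exponential tail of the prior is decisive. On $G_\omega^\mathsf{c}$ one necessarily has $\|\bbeta\| > c\, \omega$ for a constant $c > 0$ depending only on the (fixed) vectors $\{\bx_i\}_{i \in \Oset}$ (the degenerate case where they all vanish being trivial); bounding every $f$-factor by $C$ and $\prod_{i \in \Oset} 1/f(y_i) \leq K' \omega^{(\alpha+1)|\Oset|}$ via \autoref{ass:PDF}(i), it suffices to prove
\[
 \omega^{(\alpha+1)|\Oset|} \int_{\|\bbeta\| > c\, \omega} \int_0^\infty \pi(\bbeta, \sigma)\, \sigma^{-n}\, \d\sigma \, \d\bbeta \longrightarrow 0 .
\]
Conditionally on $\sigma$ the inner Gaussian integral is $\Prob(\|\bbeta\| > c\,\omega \mid \sigma)$, which decays like $\e^{-c^2 \omega^2/(2\sigma^2)}$; integrating this against the inverse-gamma law (splitting according to whether $\sigma$ is below or above order $\omega$ and performing a substitution) shows the double integral is $O(\omega^{-(2a + n + 1)})$, with $a$ the inverse-gamma shape parameter. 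Since \autoref{ass:sample_size} forces $n > (\alpha+1)|\Oset|$, we have $2a + n + 1 > (\alpha+1)|\Oset|$, so the displayed quantity tends to $0$; adding the two parts gives $m_\omega(\by)/\prod_{i \in \Oset} f(y_i) \to m(\by_{\Oset^\mathsf{c}})$, which is part~(a). (With a slowly decaying prior, as permitted in \cite{gagnon2020}, this last step instead requires a lengthy decomposition of the parameter space; the Gaussian --- or sub-exponential, cf.\ \autoref{sec:alternativeprior} --- tail turns it into a one-line estimate, which is the point of the paper.)
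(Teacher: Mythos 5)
Your proposal is correct and follows essentially the same route as the paper's proof: Scheff\'e's lemma for (c), the ratio rewriting combined with \autoref{prop:limit_PDF} for (b), and for (a) the same decomposition of the integral over the set where $|\bx_i^T\bbeta| \leq \omega/2$ and its complement, with domination and dominated convergence on the good set and a Gaussian-tail-versus-inverse-gamma estimate on the complement. The only differences are cosmetic and harmless (you bound $\|\bbeta\| > c\,\omega$ and invoke a norm/chi-square tail where the paper uses a union bound with the one-dimensional Gaussian tail bound of \autoref{lemma:3}, your stated rate $O(\omega^{-(2a+n+1)})$ should be $O(\omega^{-(2a+n)})$ and the Gaussian norm tail carries a polynomial prefactor for $p \geq 2$, and the complement estimate is written only for the regularly varying case), none of which affects the conclusion since \autoref{ass:sample_size} already gives $(\alpha+1)|\Oset| < n$.
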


An appealing aspect of \autoref{Thm} (which is typical of recent robustness characterization results) is the simplicity of the assumptions. They are easy to understand. Assumptions \ref{ass:PDF} and \ref{ass:prior} are about choices made by the model user (the error PDF $f$ and prior distribution), who is thus able to assess that these assumptions are satisfied. \autoref{ass:sample_size} is expected to hold, at least when $f$ is log-regularly varying. \autoref{ass:sample_size} is about the proportion of outliers $|\Oset| / n$ in the data set and is related to the notion of \textit{breakdown point}, generally defined as the proportion of outliers that an estimator can handle. \autoref{ass:sample_size} suggests that it is $1 / (\alpha + 1)$ in the case where $f$ is regularly varying. In this case, the validity of the assumption can be evaluated based on prior knowledge (the proportion of outliers expected for a given data set) or using outlier detection (see \cite{gagnon2020} for a technique in the context of Bayesian linear regression).

At this point, it is natural to ask whether \autoref{ass:sample_size} is necessary for the case where $f$ is regularly varying (it seems uninteresting to ask the question for the case where $f$ is log-regularly varying because we only require the proportion of outliers in this case to be less than $50\%$, corresponding to the usually desired bound). We performed a numerical experiment suggesting that \autoref{ass:sample_size} is (essentially) necessary for the case where $f$ is regularly varying. The experiment is the same as that described in \autoref{sec:intro}, except that we increased the value of more than one $y_i$. The results are presented in \autoref{fig:mean2}. In \autoref{fig:mean2} (a), the results are for the case where two observations, $y_{n - 1}$ and $y_n$, are gradually increased, with $y_{n - 1} = y_n$. In this plot, we observe a different behaviour than in \autoref{fig:mean} for the Student's $t$ model with $\nu = \alpha = 10$. In this case, $|\Oset| / n = 1 / 10$ is not lesser than $1 / (\alpha + 1) = 1 / 11$, but it is close. In fact, \autoref{ass:sample_size} can be refined to include the shape parameter of the inverse-gamma distribution of $\sigma^2$. Let us denote this shape parameter by $a>0$. When $f$ is regularly varying, \autoref{ass:sample_size} can be stated with $(|\Oset^\mathsf{c}| - \alpha |\Oset|) / 2 + a > 0$ instead. This is for the convergence in distribution (\autoref{Thm}). Because we estimate the parameter using the posterior mean, what we in fact require is $(|\Oset^\mathsf{c}| - \alpha |\Oset|) / 2 + a > 1$ (we will return to this below). In our numerical experiment, $a = 2$, which implies that $(|\Oset^\mathsf{c}| - \alpha |\Oset|) / 2 + a = 1$ which is not greater than 1 but is equal to it. There is thus a violation of the condition but it is not significant, which provides an explanation for the different convergence behaviour observed in \autoref{fig:mean2} (a). In \autoref{fig:mean2} (b), the last three observations, $y_{n -2}, y_{n - 1}$ and $y_n$, are gradually increased, with $y_{n -2} = y_{n - 1} = y_n$. In this case, $(|\Oset^\mathsf{c}| - \alpha |\Oset|) / 2 + a = -5.5$, which is significantly smaller than 1, and the estimate for the Student's $t$ model with $\nu = \alpha = 10$ increases similarly as that for the normal model, showing non-robustness. Our numerical experiment thus suggests that \autoref{ass:sample_size} is (essentially) necessary.

\begin{figure}[ht]
\centering
$\begin{array}{cc}
    \vspace{-2mm}  \hspace{-0mm} \includegraphics[width=0.50\textwidth]{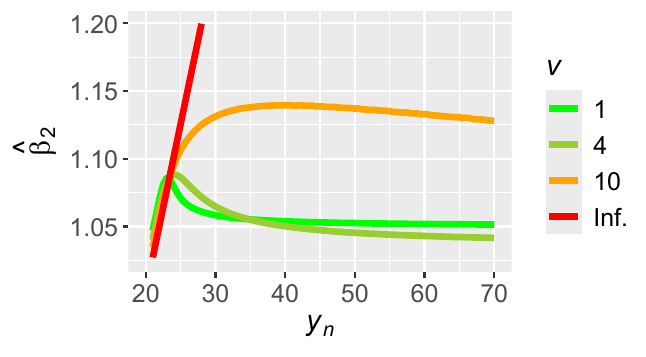} & \hspace{-5mm} \includegraphics[width=0.50\textwidth]{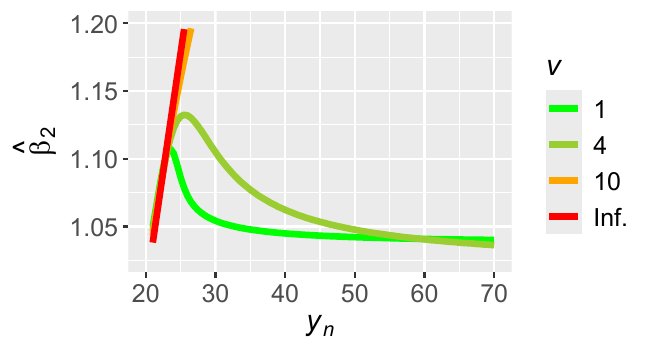} \cr
    \hspace{-0mm} \textbf{(a) 2 observations go to $+\infty$} & \hspace{-5mm} \textbf{(b) 3 observations go to $+\infty$}
\end{array}$
  \vspace{-2mm}
\caption{Posterior mean of the slope coefficient $\beta_2$ when increasing (a) $y_{n - 1}$ and $y_{n}$ with $y_{n - 1} = y_{n}$ and (b) $y_{n - 2}, y_{n - 1}$ and $y_{n}$ with $y_{n - 2} = y_{n - 1} = y_{n}$, for the Student's $t$ linear regression with different degrees of freedom $\nu$, where $\nu = \text{Inf.}$ represents the normal linear regression.}\label{fig:mean2}
\end{figure}

Regarding \autoref{ass:PDF}, the first part about regularity conditions on $f$ (positivity, symmetry, monotonicity and boundedness) is, as mentioned in \autoref{sec:context}, not necessary, but it simplifies the proofs. The second part about tail heaviness is essentially necessary. Indeed, it is necessary to have the limit in \autoref{prop:limit_PDF} to obtain \autoref{Thm}, and using a regularly or a log-regularly function $f$ is essentially necessary to have the limit in \autoref{prop:limit_PDF}. The assumption about the prior distribution, \autoref{ass:prior}, is not necessary, but it simplifies the proofs. As mentioned in \autoref{sec:context}, an alternative to \autoref{ass:prior}, for which \autoref{Thm} holds and the proof is as simple, is that where each regression coefficient has a sub-exponential prior distribution and $\sigma^2$ has a prior distribution having finite inverse moments (see \autoref{sec:alternativeprior}).

Let us now discuss the results in \autoref{Thm}. Result (a) is the centrepiece; it is the result that allows to obtain relatively easily Results (b) and (c), the latter being the interesting and important results. It states that $m_\omega(\by)$ is asymptotically equivalent to $m(\by_{\Oset^\mathsf{c}}) \prod_{i \in \Oset}f(y_i)$ (recall \eqref{eqn:constant} and \eqref{eqn:constant_wo}). Its demonstration requires considerable work as it is about the characterization of the part of the posterior density with an integral (the result is essentially that we are allowed to interchange the limit and the integral and to use \autoref{prop:limit_PDF}). Result (b) ensures the convergence of the maximum a posteriori estimate and thus that the latter is robust, if the estimate always remains within a compact subset of the parameter space as $\omega \rightarrow \infty$ and if the estimate based on $\pi(\bbeta, \sigma \mid \by_{\Oset^\mathsf{c}})$ belongs to a compact subset of the parameter space. Result (c) indicates that any estimation of $\bbeta$ and $\sigma$ based on posterior quantiles (e.g., using posterior medians or Bayesian credible intervals) is robust to outliers. It is also possible to ensure the convergence of moments under more technical assumptions (see \cite{gagnon2020}) and thus that moments are robust. All these results characterize the limiting behaviour of a variety of Bayes estimators. Finally, note that in variable selection, when the joint posterior distribution of the models and parameters is considered, this joint distribution converges if the prior distributions of the parameters of all models satisfy \autoref{ass:prior} (or the alternative assumption presented in \autoref{sec:alternativeprior}).

\section{Proof of \autoref{Thm}}\label{sec:proof}

 We start with the proof of Result (c) (assuming Result (b)). Next, we prove Result (b) (assuming Result (a)). Finally, we provide the proof of Result (a), which is longer.  In the proof, we highlight the key steps and explain how they can be adapted for another model. As an example, we present an application of the proof technique in the context of GLMs in \autoref{sec:robustGLM}. For the proof of \autoref{Thm}, we assume that $|\Oset| \geq 1$, meaning that there is at least one outlier; otherwise, the proof is trivial.

 Result (c) is a direct consequence of Result (b) by Scheffé's lemma, which states that the pointwise convergence of a PDF is sufficient to ensure the convergence in distribution (see \cite{scheffe1947useful}). To prove Result (b), we rewrite $\pi_\omega(\bbeta, \sigma \mid \by)$ for fixed $\bbeta \in \re^p$ and $\sigma > 0$ in order to exploit Result (a) and \autoref{prop:limit_PDF}:
        \[
 \pi_\omega(\bbeta, \sigma \mid \by) = \pi(\bbeta,\sigma\mid\by_{\Oset^\mathsf{c}}) \, \frac{m(\by_{\Oset^\mathsf{c}}) \prod_{i \in \Oset}f(y_i)}{m_\omega(\by)} \prod_{i \in \Oset } \frac{(1/\sigma)f((y_i-\bx_i^T\bbeta)/\sigma)}{g(\sigma) f(y_i)}.
\]
Note that $m(\mathbf{y}_{\Oset^\mathsf{c}}) < \infty$ and $m_\omega(\by) < \infty$ for all $\omega$ under Assumptions \ref{ass:PDF}, \ref{ass:prior} and \ref{ass:sample_size} (see \autoref{prop:proper}). For any $\bbeta \in \re^p$ and $\sigma > 0$,
\[
 \frac{m(\by_{\Oset^\mathsf{c}}) \prod_{i \in \Oset}f(y_i)}{m_\omega(\by)} \prod_{i \in \Oset } \frac{(1/\sigma)f((y_i-\bx_i^T \bbeta)/\sigma)}{g(\sigma) f(y_i)} \rightarrow 1,
\]
by Result (a) and \autoref{prop:limit_PDF}. The proof of Result (b) exploits the notion of linear regression but only through the limit in \autoref{prop:limit_PDF}. If the model was different, the limit result would take another form and the proof of Result (b) would need to be adapted accordingly.

We now prove Result (a) by showing that
\[
 \frac{m_\omega(\by)}{m(\by_{\Oset^\mathsf{c}})\prod_{i \in \Oset} f(y_i)} \rightarrow 1.
\]
As mentioned, this result is more difficult to prove because it involves a limit of integrals. More precisely, we need to evaluate the limit of the numerator above, in which $\prod_{i \in \Oset} f(y_i)$ is included (because it depends on $\omega$); recall \eqref{eqn:constant} and that $|y_i| = a_i + b_i \omega$ with $b_i \geq 1$ for $i \in \Oset$). To evaluate this limit, we in fact combine the numerator and the denominator in the expression above and obtain an integral involving the same expression as in \autoref{prop:limit_PDF}:
\begin{align*}
\frac{m_\omega(\by)}{m(\by_{\Oset^\mathsf{c}})\prod_{i \in \Oset} f(y_i)} &= \frac{m_\omega(\by)}{m(\by_{\Oset^\mathsf{c}})\prod_{i \in \Oset} f(y_i)}
      \int_{\re^p}\int_{0}^{\infty}\pi_\omega(\bbeta, \sigma \mid \by) \, \d\sigma \, \d\bbeta \nonumber \\
   &=   \int_{\re^p}\int_{0}^{\infty}\frac{\pi(\bbeta, \sigma) \prod_{i=1}^{n}
        (1/\sigma)f((y_i-\bx_i^T\bbeta)/\sigma)}{m(\by_{\Oset^\mathsf{c}}) \prod_{i \in \Oset} f(y_i)} \, \d\sigma \, \d\bbeta \nonumber \\
   &=   \int_{\re^p}\int_{0}^{\infty}
        \pi(\bbeta, \sigma\mid \by_{\Oset^\mathsf{c}})  \prod_{i \in \Oset } \frac{(1/\sigma)f((y_i-\bx_i^T\bbeta)/\sigma)}{g(\sigma) f(y_i)} \, \d\sigma \, \d\bbeta =: I(\omega).
\end{align*}
By \autoref{prop:limit_PDF}, we would obtain the result, that is $\lim_{\omega \rightarrow \infty} I(\omega) = 1$, if we were allowed to interchange the limit and the integral. We essentially prove that we are allowed to do so. Note that, again, this part of the proof exploits the notion of linear regression only through the limit in \autoref{prop:limit_PDF}. If the model was different, the limit result would take another form and this part would need to be adapted accordingly.

The form of $I(\omega)$ suggests the use of results like Lebesgue’s dominated convergence theorem to prove Result (a). In the case where $(y_i - \bx_i^T \bbeta)/\sigma$ is of the order of $\omega$ for $i \in \Oset$, we expect to be able to bound
\[
 \frac{(1/\sigma)f((y_i - \bx_i^T \bbeta)/\sigma)}{g(\sigma) f(y_i)}
\]
in a way that it does not depend on $\omega$ given the form of the tails of $f$ (\autoref{ass:PDF}); recall that $y_i$ is of the order of $\omega$ for $i \in \Oset$. To make this concrete, we can think of the case where $f$ is regularly varying and thus the tails have a polynomial form. We follow this strategy and define a set for $\bbeta$ on which it is (essentially) guaranteed that $(y_i - \bx_i^T \bbeta)/\sigma$ is of the order of $\omega$:
         \[
          S(\omega) := \bigcap_{i=1}^n \left\{\bbeta: |\bx_i^T\bbeta| \leq \omega / 2\right\}.
         \]
         The definition of this set exploits the notion of linear regression to (essentially) obtain that $(y_i - \bx_i^T \bbeta)/\sigma$ is of the order of $\omega$; if the model was different, the definition would need to be adapted accordingly.

         We write
         \[
          I(\omega) = I_1(\omega) + I_2(\omega),
         \]
 where
            \[
 I_1(\omega) = \int_{\re^p}\int_{0}^{\infty} \ind_{S(\omega)} \,
        \pi(\bbeta, \sigma\mid \by_{\Oset^\mathsf{c}})  \prod_{i \in \Oset } \frac{(1/\sigma)f((y_i - \bx_i^T \bbeta)/\sigma)}{g(\sigma) f(y_i)} \, \d\sigma \, \d\bbeta,
            \]
            and $I_2(\omega)$ is the integral on $S(\omega)^\mathsf{c}$. Note that $\ind_{S(\omega)} \rightarrow \ind_{\re^p}$ as $\omega \rightarrow \infty$ given that, for any $\bbeta \in \re^p$, there exists $\omega$ large enough so that $|\mathbf{x}_i^T\boldsymbol\beta| \leq \omega / 2$ for all $i$.

            We now prove that, on $S(\omega) \times (0, \infty)$, the integrand in $I(\omega)$ is bounded by $\pi(\bbeta, \sigma)$ times a polynomial in $1 / \sigma$, which does not depend on $\omega$ and is integrable (under \autoref{ass:prior}). This implies that $\lim_{\omega \rightarrow \infty} I_1(\omega) = 1$ by Lebesgue’s dominated convergence theorem (and \autoref{prop:limit_PDF}). Next, on $S(\omega)^\mathsf{c} \times (0, \infty)$, we exploit the (prior) conditional normality of $\bbeta$ to prove that $\lim_{\omega \rightarrow \infty} I_2(\omega) = 0$, which will allow to conclude that $\lim_{\omega \rightarrow \infty} I(\omega) = 1$.

            In the proof of Result (a), we thus use a decomposition of the parameter space into mutually exclusive sets, in a way, like in \cite{gagnon2020}. There is however an important difference as the sets are not the same; in the current framework, we can easily develop an intuition for the introduction of these sets and these sets do not make the majority of the steps in the proof technical and the proof lengthy.

            For $\bbeta \in S(\omega)$ and $\sigma > 0$,
\begin{align}
&\pi(\bbeta, \sigma\mid \by_{\Oset^\mathsf{c}})  \prod_{i \in \Oset } \frac{(1/\sigma)f((y_i - \bx_i^T \bbeta)/\sigma)}{g(\sigma) f(y_i)} \cr
&\quad \propto \pi(\bbeta, \sigma) \, g(\sigma)^{|\Oset|} \prod_{i \in \text{O}^\mathsf{c}} (1 / \sigma) f((y_i - \bx_i^T \bbeta) / \sigma) \prod_{i \in \Oset } \frac{(1/\sigma)f((y_i-\bx_i^T\bbeta)/\sigma)}{g(\sigma) f(y_i)} \cr
&\quad \leq C^{|\Oset^\mathsf{c}|} \, \pi(\bbeta, \sigma) \, g(\sigma)^{|\Oset|} \, \frac{1}{\sigma^{|\Oset^\mathsf{c}|}} \prod_{i \in \Oset } \frac{(1/\sigma) f((y_i - \bx_i^T \bbeta)/\sigma)}{g(\sigma) f(y_i)}  \cr
&\quad \leq C^{|\Oset^\mathsf{c}|} \, \pi(\bbeta, \sigma) \, g(\sigma)^{|\Oset|} \, \frac{1}{\sigma^{|\Oset^\mathsf{c}|}} \prod_{i \in \Oset } \frac{(1/\sigma) f(\omega/(2\sigma))}{g(\sigma) f(2b_i\omega)}  \cr
&\quad \leq C^{|\Oset^\mathsf{c}|} \, \pi(\bbeta, \sigma) \, C_2 \left(\frac{1}{\sigma^{\kappa}} + 1\right), \label{eqn:boundpart1proof}
\end{align}
using in the second line that $m(\by_{\Oset^\mathsf{c}})$ is a finite constant (\autoref{prop:proper}), in the third line that $f \leq C$ (\autoref{ass:PDF}), in the fourth line the monotonicity of $f$ (\autoref{ass:PDF}; more details follow), and \autoref{lemma:1} in the last line, $C_2$ and $\kappa$ being two positive constants independent of $\bbeta, \sigma$ and $\omega$. About the fourth line, for the numerator in
\[
 \frac{(1/\sigma) f((y_i - \bx_i^T \bbeta)/\sigma)}{g(\sigma) f(y_i)},
\]
we used that, for $i \in \Oset$ and $\bbeta \in S(\omega)$, $|y_i - \bx_i^T \bbeta| / \sigma \geq ||y_i| - |\bx_i^T \bbeta|| / \sigma \geq (a_i + b_i \omega - \omega / 2) / \sigma \geq \omega / (2 \sigma)$ by the reverse triangle inequality and because  $a_i > 0$ and $b_i \geq 1$. For the denominator, we used that $|y_i| = a_i + b_i \omega \leq 2 b_i \omega$ for large enough $\omega$. This part of the proof thus exploits the notion of linear regression to obtain the bound $|y_i - \bx_i^T \bbeta| / \sigma \geq \omega / (2 \sigma)$; if the model was different, it would need to be adapted accordingly. \autoref{lemma:1} is a technical lemma which makes precise the bound obtained for
\[
 \frac{(1/\sigma) f(\omega/(2\sigma))}{g(\sigma) f(2b_i\omega)}
\]
based on the tails of $f$ (\autoref{ass:PDF}). We easily see that, when $f$ is regularly varying, the $\omega$'s in the numerator and denominator cancel each other out given the polynomial form of the tails and we thus obtain a bound which is a function of $\sigma$. Under \autoref{ass:prior}, $(\sigma^2)^{-\kappa / 2}$ in the bound in \eqref{eqn:boundpart1proof} is integrable with respect to $\pi(\boldsymbol\beta, \sigma)$ (because $\sigma^2$ has an inverse-gamma distribution). Thus, by Lebesgue’s dominated convergence theorem and \autoref{prop:limit_PDF}, $\lim_{\omega \rightarrow \infty} I_1(\omega) = 1$.

We now turn to proving that $\lim_{\omega \rightarrow \infty} I_2(\omega) = 0$. We have that
\begin{align*}
&\int_{\re^p} \int_0^\infty  \ind_{S(\omega)^\mathsf{c}} \, \pi(\bbeta, \sigma\mid \by_{\Oset^\mathsf{c}})  \prod_{i \in \Oset } \frac{(1/\sigma)f((y_i - \bx_i^T \bbeta)/\sigma)}{g(\sigma) f(y_i)} \, \d \sigma \, \d\bbeta \cr
&\quad \propto \int_{\re^p} \int_0^\infty  \ind_{S(\omega)^\mathsf{c}} \, \pi(\bbeta, \sigma)  \, g(\sigma)^{|\Oset|}  \prod_{i \in \text{O}^\mathsf{c}} (1 / \sigma) f((y_i - \bx_i^T \bbeta) / \sigma) \prod_{i \in \Oset } \frac{(1/\sigma)f((y_i - \bx_i^T \bbeta)/\sigma)}{g(\sigma) f(y_i)} \, \d \sigma \, \d\bbeta \cr
&\quad \leq C^n \int_{\re^p} \int_0^\infty  \ind_{S(\omega)^\mathsf{c}} \, \pi(\bbeta, \sigma) \, \frac{1}{\sigma^n} \prod_{i \in \Oset } \frac{1}{f(y_i)} \, \d \sigma \, \d\bbeta \cr
&\quad \leq C^n \int_{\re^p} \int_0^\infty  \ind_{S(\omega)^\mathsf{c}} \, \pi(\bbeta, \sigma) \, \frac{1}{\sigma^n} \prod_{i \in \Oset } \frac{1}{f(2b_i\omega)} \, \d \sigma \, \d\bbeta \cr
&\quad\propto\left(\prod_{i \in \Oset } \frac{1}{f(2b_i\omega)}\right) \E\left[\sigma^{-n}  \Prob\left(\bigcup_{i=1}^n \left\{\bbeta: |\bx_i^T\bbeta| > \omega / 2\right\} \mid \sigma\right)\right],
\end{align*}
using in the second line that $m(\by_{\Oset^\mathsf{c}})$ is a finite constant (\autoref{prop:proper}), in the third line that $f \leq C$ (\autoref{ass:PDF}) and the monotonicity of $f$ in the fourth line (\autoref{ass:PDF}) given that $|y_i| = a_i + b_i \omega \leq 2 b_i \omega$ for large enough $\omega$. Notice how this part of the proof does not exploit the notion of linear regression, except for the definition of $S(\omega)^\mathsf{c}$ which appears in the probability in the last line.

We finish the proof by showing that
\[
 \E\left[\sigma^{-n}  \Prob\left(\bigcup_{i=1}^n \left\{\bbeta: |\bx_i^T\bbeta| > \omega / 2\right\} \mid \sigma\right)\right]
\]
goes to 0 more quickly than $\prod_{i \in \Oset } f(2b_i\omega)^{-1}$ goes to infinity. If $\bbeta$ and $\sigma$ were independent \textit{a priori} (with a prior covariance proportional to $\I_p$ for $\bbeta$), we would have that $\Prob\left(\bigcup_{i=1}^n \left\{\bbeta: |\bx_i^T\bbeta| > \omega / 2\right\}\right)$ go to 0 exponentially quickly given that $\bx_i^T\bbeta$ is normal. Because $\prod_{i \in \Oset } f(2b_i\omega)^{-1}$ goes to infinity polynomially quickly (\autoref{ass:PDF}), we would be able to conclude. This is used in \autoref{sec:alternativeprior} to prove that \autoref{Thm} holds with essentially the same proof by assuming that each regression coefficient has a sub-exponential prior distribution and $\sigma^2$ has a prior distribution having finite inverse moments.

 Here, we need to be more careful because $\bbeta$ and $\sigma$ are not independent \textit{a priori}:
\begin{align*}
\E\left[\sigma^{-n} \Prob\left(\bigcup_{i=1}^n \left\{\bbeta: |\bx_i^T\bbeta| > \omega / 2\right\} \mid \sigma\right)\right] &\leq \sum_{i=1}^n  \E\left[\sigma^{-n} \Prob\left(\left\{\bbeta: |\bx_i^T\bbeta| > \omega / 2\right\} \mid \sigma\right)\right] \cr
 &\leq \sum_{i=1}^n\E\left[\sigma^{-n}  \frac{1}{\sqrt{2\pi}} \frac{4 \|\bx_i\| \sigma}{\omega} \, \exp\left(-\frac{\omega^2}{8 \|\bx_i\|^2 \sigma^2}\right)\right],
\end{align*}
using in the first inequality the union bound and in the second inequality that, given $\sigma$, $\bx_i^T\bbeta$ has a normal distribution with a mean of $0$ and a variance of $\|\bx_i\|^2 \sigma^2$, together with the following tail bound: for $Z_{\sigma_0} \sim \mathcal{N}(0, \sigma_0^2)$ with $\sigma_0 > 0$ a constant,
\[
 \Prob(Z_{\sigma_0} \geq t) \leq \frac{1}{\sqrt{2 \pi}} \frac{\sigma_0}{t} \exp\left(-\frac{t^2}{2\sigma_0^2}\right), \quad t > 0.
\]
This tail bound is relatively well known, but we provide a proof for completeness in \autoref{sec:lemmas} (see \autoref{lemma:3}).

We now prove that
\[
  \left[\prod_{i \in \Oset } f(2b_i\omega)^{-1}\right]\E\left[\sigma^{-n}  \frac{1}{\sqrt{2\pi}} \frac{4 \|\bx_i\| \sigma}{\omega} \, \exp\left(-\frac{\omega^2}{8 \|\bx_i\|^2 \sigma^2}\right)\right] \rightarrow 0,
\]
for all $i$, which will allow to conclude. We omit the constants (with respect to $\omega$) to simplify as they do not change the conclusion. Under \autoref{ass:prior}, $\tau = \sigma^{-2}$ has a gamma distribution and let us denote by $a >0$ and $b >0$ its scale and shape parameters, respectively. We have that
\begin{align*}
 &\left[\prod_{i \in \Oset } f(2b_i\omega)^{-1}\right] \frac{1}{\omega} \E\left[\sigma^{-(n - 1)} \exp\left(-\frac{\omega^2}{8 \|\bx_i\|^2 \sigma^2}\right)\right] \cr
 &\quad= \left[\prod_{i \in \Oset } f(2b_i\omega)^{-1}\right] \frac{1}{\omega} \E\left[\tau^{(n - 1) / 2} \exp\left(-\frac{\omega^2 \tau}{8 \|\bx_i\|^2}\right)\right] \cr
 &\quad= \left[\prod_{i \in \Oset } f(2b_i\omega)^{-1}\right] \frac{1}{\omega} \int_0^\infty \tau^{(n - 1) / 2} \exp\left(-\frac{\omega^2 \tau}{8 \|\bx_i\|^2}\right) \frac{\tau^{a - 1} \exp(-\tau / b)}{\Gamma(a) \, b^a} \, \d\tau \cr
 &\quad=\left[\prod_{i \in \Oset } f(2b_i\omega)^{-1}\right] \frac{1}{\omega}\frac{\Gamma((n-1)/2 + a) \, \left(\frac{1}{b} + \frac{\omega^2}{8 \|\bx_i\|^2}\right)^{-((n-1)/2 + a)}}{\Gamma(a) \, b^a} \int_0^\infty \frac{\tau^{(n - 1) / 2 + a - 1} \exp\left(-\tau \Big/\left(\frac{1}{b} + \frac{\omega^2}{8 \|\bx_i\|^2}\right)^{-1}\right)}{\Gamma((n-1)/2 + a) \, \left(\frac{1}{b} + \frac{\omega^2}{8 \|\bx_i\|^2}\right)^{-((n-1)/2 + a)}} \, \d\tau \cr
 &\quad \leq\left[\prod_{i \in \Oset } f(2b_i\omega)^{-1}\right] \frac{\Gamma((n-1)/2 + a) \, (8 \|\bx_i\|^2)^{(n-1)/2+a}}{\Gamma(a) \, b^a}\frac{1}{\omega^{n + 2a}} \cr
  &\quad\leq \left[\prod_{i \in \Oset } f(2b_i\omega)^{-1}\right] \frac{\Gamma((n-1)/2 + a) \, (8 \|\bx_i\|^2)^{(n-1)/2+a}}{\Gamma(a) \, b^a}\frac{1}{\omega^{n}},
\end{align*}
using in the first inequality that $1/b > 0$ and in the second one that $a > 0$. The integral in the fourth line is equal to 1 as it is the integral of a gamma PDF over the whole support.

The proof is concluded given that
\[
 \frac{1}{\omega^{n}} \prod_{i \in \Oset } f(2b_i\omega)^{-1} \rightarrow 0
\]
as $\omega\rightarrow \infty$ by \autoref{lemma:2}. This lemma is technical and makes precise the convergence. Essentially, when $f$ is regularly varying, $f(2b_i\omega)^{-1}$ is of the order of $\omega^{\alpha + 1}$ and the convergence is obtained as $\prod_{i \in \Oset } \omega^{\alpha + 1} = \omega^{\alpha |\Oset| + |\Oset|}$, $n =  |\Oset^\mathsf{c}| +  |\Oset|$ and $|\Oset^\mathsf{c}| > \alpha |\Oset|$ (\autoref{ass:sample_size}).

\section{Conclusion}\label{sec:conclusion}

In this paper, we focused on promoting the development of theory in \emph{resolution of conflict} and the accessibility of this line of research  within the Bayesian literature about robustness against outliers. To promote the accessibility, we presented a simple and intuitive proof of a robustness characterization result for a general heavy-tailed linear regression model. The key element making the proof simple and intuitive, while considering a broad class of heavy-tailed error distributions, is the exponential decay of the regression coefficient prior PDF and the finiteness of the inverse moments of the error scale prior distribution. To promote the development of theory, we highlighted in the proof the key steps that would need to be adapted when proving a robustness characterization result for a different model. We present an application of the proof technique in the context of GLMs in \autoref{sec:robustGLM}, clarifying how to adapt these steps.

By focusing on the line of research of resolution of conflict, we did not discuss broadly in this paper the different approaches to robustness against outliers. The approach covered in this paper consists of using heavy-tailed distributions, which is a classical approach in Bayesian statistics. The distribution used typically depends on the model that one wants to make robust. Thus, the approach is not generic. There do exists generic approaches, such as that of \cite{ghosh2016robust} which consists of using a density power divergence. It is discussed more broadly in the context of divergence-based loss functions in \cite{jewson2018principles}. Both works can be seen as fitting within the generalized Bayesian framework of \cite{bissiri2016general}. Recently, \cite{bhatia2024bayesian} proposed a different generic approach which is instead based on a robust MCMC scheme. With this approach, the robustness comes from the algorithm which is used for inference. The strength of these methods lies in their generality, at the price of being less transparent and preventing the derivation of precise results. It is the opposite for the classical approach of using heavy-tailed distributions.

\bibliographystyle{rss}
\bibliography{references}

\appendix

\section{Numerical experiment}\label{sec:numerical}

The numerical experiment whose results are presented in \autoref{fig:mean} is based on an analysis of a simulated data set with $n = 20$, $p = 2$, $(x_{1,2}, x_{2,2}, \ldots, x_{n,2}) = (1, 2, \ldots, n)$, and where $y_1, \ldots, y_n$ were first sampled using intercept and slope coefficients both equal to 1, an error scaling of 1 and errors sampled independently from the standard normal distribution; we then obtain a sequence of data sets by gradually increasing the value of $y_n$.

To estimate the parameters of each Student's $t$ model, we sample from the posterior distribution using Hamiltonian Monte Carlo (HMC). To run this algorithm, we need to evaluate the posterior density up to a normalizing constant and to evaluate the gradient of the log density. We now write the posterior density (up to a normalizing constant), and next,  the gradient of the log density. Let us consider that the shape and scale parameters of the inverse-gamma prior distribution are $a > 0$ and $b >0$, respectively. We write the posterior density by considering $\tau :=\sigma^2$ as the variable:
\begin{align*}
 \pi_\omega(\bbeta, \tau \mid \by) &\propto \pi(\tau) \, \pi(\bbeta \mid \tau) \prod_{i=1}^n \frac{1}{\tau^{1/2}}f\left(\frac{y_i - \mathbf{x}_i^T \boldsymbol\beta}{\tau^{1/2}}\right) \cr
 &= \pi(\tau) \, \pi(\bbeta \mid \tau) \, \frac{1}{\tau^{n/2}} \prod_{i=1}^n f\left(\frac{y_i - \mathbf{x}_i^T \boldsymbol\beta}{\tau^{1/2}}\right).
\end{align*}

For typical MCMC samplers (such as HMC), it is usually good practice to apply changes of variables to obtain variables that all take values on the real line. We thus define $\gamma := \log \tau$ and obtain
\[
\pi_\omega(\bbeta, \gamma \mid \by) \propto \pi(\e^{\gamma}) \, \pi(\bbeta \mid \e^{\gamma}) \, \frac{1}{\e^{\gamma (n  / 2 - 1)}} \prod_{i=1}^n f\left(\frac{y_i - \mathbf{x}_i^T \boldsymbol\beta}{\e^{\gamma / 2}}\right).
\]
The log density is such that (if we forget about the normalizing constant):
\[
\log \pi_\omega(\bbeta, \gamma \mid \by) = \log \pi(\e^{\gamma}) + \log \pi(\bbeta \mid \e^{\gamma}) - (n / 2 - 1) \gamma + \sum_{i=1}^n \log f\left(\frac{y_i - \mathbf{x}_i^T \boldsymbol\beta}{\e^{\gamma / 2}}\right),
\]
where, under \autoref{ass:prior} and for the Student's $t$ model, $\log \pi(\e^{\gamma})$ is the log PDF of the inverse-gamma distribution evaluated at $\e^{\gamma}$, $\log \pi(\bbeta \mid \e^{\gamma})$ is the log PDF of a normal distribution with a mean of $\mathbf{0}$ and a covariance matrix of $\e^{\gamma} \I_p$ evaluated at $\bbeta$ and $\log f$ is the log PDF of a Student's $t$ distribution with $\nu$ degrees of freedom. The gradient is thus such that:
\[
 \frac{\partial}{\partial \bbeta} \, \log \pi_\omega(\bbeta, \gamma \mid \by) = - \e^{-\gamma} \bbeta +\e^{-\gamma} \, \frac{\nu + 1}{\nu} \sum_{i = 1}^n \left(1 + \frac{(y_i - \mathbf{x}_i^T \boldsymbol\beta)^2}{\e^{\gamma} \nu}\right)^{-1} (y_i - \mathbf{x}_i^T \boldsymbol\beta) \, \mathbf{x}_i,
\]
\begin{align*}
\frac{\partial}{\partial \gamma} \, \log \pi_\omega(\bbeta, \gamma \mid \by) &= -(a + 1) + b \e^{-\gamma} +  \frac{\e^{-\gamma}}{2} \, \bbeta^T \bbeta - (n / 2 + p / 2 - 1) \cr
&\qquad  + \frac{\nu + 1}{2\nu} \, \e^{-\gamma} \sum_{i = 1}^n  \left(1 + \frac{(y_i - \mathbf{x}_i^T \boldsymbol\beta)^2}{\e^{\gamma} \nu}\right)^{-1} (y_i - \mathbf{x}_i^T \boldsymbol\beta)^2.
\end{align*}

For the numerical experiment, we also need to compute the posterior expectation of the slope coefficient $\beta_2$ for the normal model. We now present a proposition with an explicit expression for this expectation.

\begin{Proposition}\label{prop:post_norm}
   Suppose that \autoref{ass:prior} holds and that the shape and scale parameters of the inverse-gamma are $a > 0$ and $b >0$, respectively. If $f$ is the standard normal PDF, then the posterior distribution is such that: $\bbeta$ given $\sigma$ has a normal distribution with a mean of $\hat{\bbeta}$ and a covariance matrix of $\sigma^2(\bX^T \bX + \I_p)^{-1}$, and $\sigma^2$ has an inverse-gamma distribution with a shape parameter of $(2a + n) / 2$ and a scale parameter of
   \[
 \frac{2b + \by^T \by  - \hat{\bbeta}^T (\bX^T \bX + \I_p) \hat{\bbeta}}{2},
\]
 where $\hat{\bbeta} = (\bX^T \bX + \I_p)^{-1} \bX^T \by$ and $\bX$ is the design matrix. In particular, the posterior expectation of $\bbeta$ is $\hat{\bbeta}$.
\end{Proposition}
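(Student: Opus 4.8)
The plan is to carry out the classical conjugate-prior computation, the only real care being the bookkeeping of the powers of $\sigma$. First I would write the joint posterior density, up to a multiplicative constant not depending on $(\bbeta,\sigma)$, as the product of three kernels: the inverse-gamma prior $(\sigma^2)^{-(a+1)}\e^{-b/\sigma^2}$, the conditional normal prior $(\sigma^2)^{-p/2}\e^{-\bbeta^T\bbeta/(2\sigma^2)}$, and the Gaussian likelihood $(\sigma^2)^{-n/2}\e^{-\|\by-\bX\bbeta\|^2/(2\sigma^2)}$.

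Next, I would isolate the dependence on $\bbeta$ in the exponent and complete the square:
\[
\|\by - \bX\bbeta\|^2 + \bbeta^T\bbeta = \bbeta^T(\bX^T\bX + \I_p)\bbeta - 2\bbeta^T\bX^T\by + \by^T\by = (\bbeta - \hat\bbeta)^T(\bX^T\bX+\I_p)(\bbeta-\hat\bbeta) + \by^T\by - \hat\bbeta^T(\bX^T\bX+\I_p)\hat\bbeta,
\]
with $\hat\bbeta = (\bX^T\bX+\I_p)^{-1}\bX^T\by$ (the matrix $\bX^T\bX+\I_p$ is positive definite, hence invertible, so $\hat\bbeta$ is well defined). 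This identifies the conditional law of $\bbeta$ given $\sigma$ as $\mathcal{N}(\hat\bbeta, \sigma^2(\bX^T\bX+\I_p)^{-1})$. Integrating $\bbeta$ out over $\re^p$ then produces a Gaussian normalizing factor proportional to $(\sigma^2)^{p/2}$ (the determinant $|(\bX^T\bX+\I_p)^{-1}|^{1/2}$ being a constant), which exactly cancels the $(\sigma^2)^{-p/2}$ coming from the prior on $\bbeta$. Collecting the surviving $\sigma^2$-terms leaves the kernel $(\sigma^2)^{-(a + n/2 + 1)}\exp\{-(2b + \by^T\by - \hat\bbeta^T(\bX^T\bX+\I_p)\hat\bbeta)/(2\sigma^2)\}$, which is precisely that of an inverse-gamma distribution with shape $(2a+n)/2$ and the stated scale; this scale is positive since $\by^T\by - \hat\bbeta^T(\bX^T\bX+\I_p)\hat\bbeta = \by^T(\I_n - \bX(\bX^T\bX+\I_p)^{-1}\bX^T)\by \ge 0$.

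Finally, since $\hat\bbeta$ does not depend on $\sigma$, the tower property gives $\E[\bbeta \mid \by] = \E[\E[\bbeta \mid \sigma, \by] \mid \by] = \hat\bbeta$. There is no genuine obstacle here: the proof is a routine calculation, and the only place that demands attention is keeping the power-of-$\sigma$ accounting exact, so that marginalizing out $\bbeta$ leaves an inverse-gamma with the claimed shape $(2a+n)/2$, and reporting the completed-square constant in the compact form $\by^T\by - \hat\bbeta^T(\bX^T\bX+\I_p)\hat\bbeta$ rather than an equivalent but unwieldy expression.
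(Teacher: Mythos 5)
Your proposal is correct and follows essentially the same route as the paper: write the joint posterior, complete the square in $\bbeta$ to obtain the term $(\bbeta-\hat\bbeta)^T(\bX^T\bX+\I_p)(\bbeta-\hat\bbeta) + \by^T\by - \hat\bbeta^T(\bX^T\bX+\I_p)\hat\bbeta$, read off the conditional normal law of $\bbeta$ given $\sigma^2$, and identify the remaining kernel in $\sigma^2$ as the stated inverse-gamma. Your explicit remarks on the cancellation of the $(\sigma^2)^{\pm p/2}$ factors when marginalizing, the positivity of the scale, and the tower-property step for the posterior mean are minor additions of care beyond the paper's argument, not a different method.
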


\begin{proof}
 We write the proof by considering $\tau =\sigma^2$ as the variable. In normal linear regression, $\bY$, given $\bbeta$ and $\tau$, has a normal distribution with a mean of $\bX \bbeta$ and a covariance matrix of $\tau \I_n$. Therefore, we can write the posterior density as:
 \begin{align*}
  \pi_\omega(\bbeta, \tau \mid \by) &\propto \pi(\tau) \, \frac{1}{\tau^{p/2}} \exp\left(-\frac{1}{2\tau} \, \bbeta^T \bbeta\right) \frac{1}{\tau^{n/2}} \exp\left(-\frac{1}{2\tau} (\by - \bX \bbeta)^T (\by - \bX \bbeta)\right) \cr
  &= \pi(\tau) \, \frac{1}{\tau^{\frac{p+n}{2}}} \exp\left(-\frac{1}{2\tau}\left[ (\by - \bX \bbeta)^T (\by - \bX \bbeta) + \bbeta^T \bbeta\right] \right).
 \end{align*}
 We analyse the term in the exponential:
 \begin{align*}
  (\by - \bX \bbeta)^T (\by - \bX \bbeta) + \bbeta^T \bbeta &= \by^T \by - \by^T \bX \bbeta - (\bX \bbeta)^T \by + \bbeta^T \bX^T \bX \bbeta + \bbeta^T \bbeta \cr
  &= \by^T \by - \by^T \bX \bbeta - (\bX \bbeta)^T \by + (\bbeta -  \hat{\bbeta} +  \hat{\bbeta})^T (\bX^T \bX + \I_p) (\bbeta -  \hat{\bbeta} +  \hat{\bbeta}) \cr
  &= \by^T \by + (\bbeta -  \hat{\bbeta})^T (\bX^T \bX + \I_p) (\bbeta -  \hat{\bbeta}) - \hat{\bbeta}^T (\bX^T \bX + \I_p) \hat{\bbeta},
 \end{align*}
 using that $\by^T \bX \bbeta = (\bX \bbeta)^T \by$ (because it is a scalar) and
 \begin{align*}
 \hat{\bbeta}^T (\bX^T \bX + \I_p) \bbeta = \bbeta^T   (\bX^T \bX + \I_p) \hat{\bbeta} =\bbeta^T   (\bX^T \bX + \I_p) (\bX^T \bX + \I_p)^{-1} \bX^T \by =  (\bX \bbeta)^T \by.
 \end{align*}
 Therefore,
 \begin{align*}
  \pi_\omega(\bbeta, \tau \mid \by) &\propto \pi(\tau) \, \frac{1}{\tau^{\frac{n}{2}}} \exp\left(-\frac{1}{2\tau}\left[\by^T \by  - \hat{\bbeta}^T (\bX^T \bX + \I_p) \hat{\bbeta}\right] \right) \frac{1}{\tau^{\frac{p}{2}}} \exp\left(-\frac{1}{2\tau} (\bbeta -  \hat{\bbeta})^T (\bX^T \bX + \I_p) (\bbeta -  \hat{\bbeta}) \right).
 \end{align*}
From this, we can conclude that $\bbeta$ given $\tau$ has a normal distribution with a mean of $\hat{\bbeta}$ and a covariance matrix of $\tau (\bX^T \bX + \I_p)^{-1}$. Regarding $\tau$, we have that
\begin{align*}
 \pi(\tau) \, \frac{1}{\tau^{\frac{n}{2}}} \exp\left(-\frac{1}{2\tau}\left[\by^T \by  - \hat{\bbeta}^T (\bX^T \bX + \I_p) \hat{\bbeta}\right] \right) \propto \frac{1}{\tau^{\frac{2a + n}{2} + 1}} \exp\left(- \frac{1}{2\tau}\left[2b + \by^T \by  - \hat{\bbeta}^T (\bX^T \bX + \I_p) \hat{\bbeta}\right]\right),
\end{align*}
which allows to conclude that the posterior distribution of $\tau$ is an inverse-gamma with a shape parameter of $(2a + n) / 2$ and a scale parameter of
\[
 \frac{2b + \by^T \by  - \hat{\bbeta}^T (\bX^T \bX + \I_p) \hat{\bbeta}}{2}.
\]
\end{proof}

\section{Three lemmas}\label{sec:lemmas}

In this section, we present three lemmas used in the proof of \autoref{Thm}.

\begin{Lemma}\label{lemma:1}
Suppose Assumptions \ref{ass:PDF} and \ref{ass:sample_size} hold. For all $\omega$ large enough and $\sigma > 0$, there exist constants $C_2 > 0$ and $\kappa > 0$, such that
\[
 g(\sigma)^{|\Oset|} \, \frac{1}{\sigma^{|\Oset^\mathsf{c}|}} \prod_{i \in \Oset } \frac{(1/\sigma) f(\omega/(2\sigma))}{g(\sigma) f(2b_i\omega)} \leq C_2 \left(\frac{1}{\sigma^\kappa} + 1\right),
\]
the constants $C_2 > 0$ and $\kappa > 0$ being thus independent of $\omega$ and $\sigma$.
\end{Lemma}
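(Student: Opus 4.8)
The plan is to begin with a purely algebraic simplification of the left-hand side. In the product over $i \in \Oset$, each of the $|\Oset|$ factors carries a $g(\sigma)$ in its denominator, so the prefactor $g(\sigma)^{|\Oset|}$ cancels entirely, and collecting the $1/\sigma$ terms shows that the left-hand side equals
\[
 \frac{1}{\sigma^{n}} \, f(\omega/(2\sigma))^{|\Oset|} \prod_{i \in \Oset} \frac{1}{f(2 b_i \omega)}.
\]
Thus $g$ disappears, and the task reduces to bounding this expression by combining an \emph{upper} bound on $f$ evaluated at $\omega/(2\sigma)$ with a \emph{lower} bound on $f$ evaluated at the $2 b_i \omega$, arranged so that the powers of $\omega$ cancel (which is exactly the intuition flagged in the main text for the regularly varying case).

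For the upper bound I would establish envelope inequalities valid for every $y \in \re$. In the regularly varying case, combining boundedness ($f \le C$) with the tail limit in \autoref{ass:PDF} gives a constant $C' > 0$ with $f(y) \le C' (1 + |y|)^{-(\alpha+1)}$; in the log-regularly varying case, the tail limit similarly gives $C''' > 0$ with $f(y) \le C''' (1 + |y|)^{-1} (\log(\e + |y|))^{-(\alpha+1)}$. In each case one checks the inequality separately for $|y|$ large, using the limit, and for $|y|$ in a bounded set, using $f \le C$; no continuity or monotonicity is needed. For the lower bound, the same tail limit gives, for $\omega$ large enough (recall $b_i \ge 1$ and there are finitely many outliers), $f(2 b_i \omega)^{-1} \le c_P\, \omega^{\alpha+1}$ in the regularly varying case and $f(2 b_i \omega)^{-1} \le c_P\, \omega (\log \omega)^{\alpha+1}$ in the log-regularly varying case, for suitable constants $c_P$ (absorbing $\log(2b_i\omega) \le 2\log\omega$ for $\omega$ large).

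Next I would combine these using the elementary inequality $\omega / (1 + \omega/(2\sigma)) \le 2\sigma$. The regularly varying case is then immediate: the powers of $\omega$ cancel and the left-hand side is at most a constant times $\sigma^{-(|\Oset^\mathsf{c}| - \alpha |\Oset|)}$, so the claim holds with $\kappa = |\Oset^\mathsf{c}| - \alpha |\Oset|$, which is positive by \autoref{ass:sample_size}. The log-regularly varying case requires more care, because after the same manipulation one is left with a stray factor $(\log\omega / \log(\e + \omega/(2\sigma)))^{|\Oset|(\alpha+1)}$, which can be as large as $(\log\omega)^{|\Oset|(\alpha+1)}$ — precisely when $\sigma$ is of order $\omega$. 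I would handle this by splitting on $\sigma \le \sqrt{\omega}$ versus $\sigma > \sqrt{\omega}$: in the first regime $\log(\e + \omega/(2\sigma)) \ge \tfrac14 \log\omega$ for $\omega$ large, so the stray factor is bounded by a constant and the left-hand side is at most a constant times $\sigma^{-|\Oset^\mathsf{c}|}$; in the second regime $\sigma^{-|\Oset^\mathsf{c}|} \le \omega^{-|\Oset^\mathsf{c}|/2}$ absorbs the stray $(\log\omega)^{|\Oset|(\alpha+1)}$ since $|\Oset^\mathsf{c}| \ge 1$ (which holds under \autoref{ass:sample_size}), so the left-hand side is bounded by a constant for $\omega$ large. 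Taking $\kappa = |\Oset^\mathsf{c}|$ and $C_2$ the maximum of the finitely many constants produced then yields the bound $C_2(\sigma^{-\kappa} + 1)$, with the ``$+1$'' exactly accommodating the second regime.

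The main obstacle is this last point: the logarithmic envelope does not cancel the $\omega$-dependence as cleanly as the polynomial envelope does, and one must exploit the competition between the surviving $(\log\omega)^{|\Oset|(\alpha+1)}$ and the decay $\sigma^{-|\Oset^\mathsf{c}|}$ (equivalently a negative power of $\omega$ once $\sigma \gtrsim \sqrt{\omega}$). Everything else is routine bookkeeping: fixing the threshold for ``$\omega$ large enough'' so that the tail-limit estimates and $\log(\e + \omega/(2\sigma)) \ge \tfrac14\log\omega$ hold, and verifying that none of the constants $C'$, $C'''$, $c_P$, $\kappa$, $C_2$ depend on $\sigma$ or $\omega$.
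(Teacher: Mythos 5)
Your proof is correct, and at its core it follows the same route as the paper's: cancel $g(\sigma)$, play the (log-)polynomial tail asymptotics of $f$ at $2b_i\omega$ against an upper bound on $f(\omega/(2\sigma))$, fall back on $f \leq C$ in the regime where $\omega/(2\sigma)$ need not lie in the tail, and let \autoref{ass:sample_size} make the surviving exponent of $\sigma$ positive; the exponents you obtain, $\kappa = |\Oset^\mathsf{c}| - \alpha|\Oset|$ in the regularly varying case and $\kappa = |\Oset^\mathsf{c}|$ in the log-regularly varying case, coincide with the paper's. The differences are in the bookkeeping, and they are mild streamlinings rather than a new idea. The paper works with two-sided $(1\pm\delta)$-bounds valid only for $|y| > y_0$ and therefore splits each case into two situations according to whether $\omega/(2\sigma)$ exceeds a threshold ($y_0$ in the regularly varying case, $\omega^{1/4}$ in the log-regularly varying case); your global envelopes $f(y) \leq C'(1+|y|)^{-(\alpha+1)}$ and $f(y) \leq C'''(1+|y|)^{-1}(\log(\e+|y|))^{-(\alpha+1)}$, combined with $\omega/(1+\omega/(2\sigma)) \leq 2\sigma$, eliminate the split altogether in the regularly varying case, while your split at $\sigma \leq \sqrt{\omega}$ versus $\sigma > \sqrt{\omega}$ in the log-regularly varying case is the same device as the paper's split, just with a different threshold. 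A small bonus of your version is that the log-regularly varying case only uses $|\Oset^\mathsf{c}| \geq 1$ (to absorb the stray $(\log\omega)^{|\Oset|(\alpha+1)}$ into $\omega^{-|\Oset^\mathsf{c}|/2}$), whereas the paper's second situation invokes $|\Oset^\mathsf{c}| \geq |\Oset|$ to discard $\sigma^{-(|\Oset^\mathsf{c}|-|\Oset|)}$; both are of course available under the lemma's hypotheses, so this only marginally weakens what is needed.
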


\begin{proof}
 First, we prove the result for the case where $f$ is regularly varying. In this case,
\[
 g(\sigma)^{|\Oset|} \, \frac{1}{\sigma^{|\Oset^\mathsf{c}|}} \prod_{i \in \Oset } \frac{(1/\sigma) f(\omega/(2\sigma))}{g(\sigma) f(2b_i\omega)} = \frac{1}{\sigma^{|\Oset^\mathsf{c}| - |\Oset|\alpha}} \prod_{i \in \Oset } \frac{(1/\sigma) f(\omega/(2\sigma))}{\sigma^\alpha f(2b_i\omega)}.
\]
From \autoref{ass:PDF}, we can deduce that for all $0 < \delta < 1$, there exists $\y0 >0$ such that for all $|y| > \y0$,
\[
 (1- \delta) C_f \, |y|^{-(\alpha + 1)} < f(y) < (1 + \delta) C_f \, |y|^{-(\alpha + 1)}.
\]
Let us consider such a $\delta$. For large enough $\omega$, $2b_i\omega \geq \y0$, and therefore,
\[
 \frac{1}{\sigma^{|\Oset^\mathsf{c}| - |\Oset|\alpha}} \prod_{i \in \Oset } \frac{(1/\sigma) f(\omega/(2\sigma))}{\sigma^\alpha f(2b_i\omega)}  \leq (1 - \delta)^{-|\Oset|} \frac{1}{\sigma^{|\Oset^\mathsf{c}| - |\Oset|\alpha}} \prod_{i \in \Oset } \frac{(1/\sigma) f(\omega/(2\sigma))}{\sigma^\alpha C_f \, (2b_i\omega)^{-(\alpha + 1)}}.
\]
Now, we consider two situations. First, we consider that $\omega/(2\sigma) > \y0$. In this situation,
\begin{align*}
(1 - \delta)^{-|\Oset|} \frac{1}{\sigma^{|\Oset^\mathsf{c}| - |\Oset|\alpha}} \prod_{i \in \Oset } \frac{(1/\sigma) f(\omega/(2\sigma))}{\sigma^\alpha C_f \, (2b_i\omega)^{-(\alpha + 1)}} &\leq (1 + \delta)^{|\Oset|}  (1 - \delta)^{-|\Oset|} \frac{1}{\sigma^{|\Oset^\mathsf{c}| - |\Oset|\alpha}} \prod_{i \in \Oset } \frac{(1/\sigma) C_f \, (\omega/(2\sigma))^{-(\alpha + 1) }}{\sigma^\alpha C_f \, (2b_i\omega)^{-(\alpha + 1)}} \cr
&= (1 + \delta)^{|\Oset|}  (1 - \delta)^{-|\Oset|} \frac{1}{\sigma^{|\Oset^\mathsf{c}| - |\Oset|\alpha}} \prod_{i \in \Oset } (4 b_i)^{\alpha + 1}.
\end{align*}
Second, we consider that $\omega/(2\sigma) < \y0 \Leftrightarrow 1 / \sigma < 2\y0 / \omega$. In this situation,
\begin{align*}
(1 - \delta)^{-|\Oset|} \frac{1}{\sigma^{|\Oset^\mathsf{c}| - |\Oset|\alpha}} \prod_{i \in \Oset } \frac{(1/\sigma) f(\omega/(2\sigma))}{\sigma^\alpha C_f \, (2b_i\omega)^{-(\alpha + 1)}} &\leq   (1 - \delta)^{-|\Oset|} \frac{1}{\sigma^{|\Oset^\mathsf{c}| - |\Oset|\alpha}} \prod_{i \in \Oset } \frac{(2 \y0)^{\alpha+1} C}{\omega^{\alpha + 1} C_f \, (2b_i\omega)^{-(\alpha + 1)}} \cr
&=  (1 - \delta)^{-|\Oset|} \frac{1}{\sigma^{|\Oset^\mathsf{c}| - |\Oset|\alpha}} \prod_{i \in \Oset } (4 b_i \y0)^{\alpha + 1}  C / C_f,
\end{align*}
using that $f \leq C$ (\autoref{ass:PDF}). Therefore, in both situations, there exists a constant $C_2 > 0$ such that
\[
  g(\sigma)^{|\Oset|} \, \frac{1}{\sigma^{|\Oset^\mathsf{c}|}} \prod_{i \in \Oset } \frac{(1/\sigma) f(\omega/(2\sigma))}{g(\sigma) f(2b_i\omega)} \leq C_2 \frac{1}{\sigma^{|\Oset^\mathsf{c}| - |\Oset|\alpha}} \leq C_2 \left(\frac{1}{\sigma^{|\Oset^\mathsf{c}| - |\Oset|\alpha}} + 1\right).
\]

Now, we prove the result for the case where $f$ is log-regularly varying. The proof is similar. In this case,
\[
 g(\sigma)^{|\Oset|} \, \frac{1}{\sigma^{|\Oset^\mathsf{c}|}} \prod_{i \in \Oset } \frac{(1/\sigma) f(\omega/(2\sigma))}{g(\sigma) f(2b_i\omega)} = \frac{1}{\sigma^{|\Oset^\mathsf{c}|}} \prod_{i \in \Oset } \frac{(1/\sigma) f(\omega/(2\sigma))}{ f(2b_i\omega)}.
\]
From \autoref{ass:PDF}, we can deduce that for all $0 < \delta < 1$, there exists $\y0 >0$ such that for all $|y| > \y0$,
\[
 (1- \delta) C_f \, |y|^{-1}(\log|y|)^{-(\alpha + 1)} < f(y) < (1 + \delta) C_f \, |y|^{-1}(\log|y|)^{-(\alpha + 1)}.
\]
Let us consider such a $\delta$. For large enough $\omega$, $2b_i\omega \geq \y0$, and therefore,
\[
 \frac{1}{\sigma^{|\Oset^\mathsf{c}| }} \prod_{i \in \Oset } \frac{(1/\sigma) f(\omega/(2\sigma))}{ f(2b_i\omega)}  \leq (1 - \delta)^{-|\Oset|} \frac{1}{\sigma^{|\Oset^\mathsf{c}| }} \prod_{i \in \Oset } \frac{(1/\sigma) f(\omega/(2\sigma))}{ C_f \, (2b_i\omega)^{-1}(\log(2b_i\omega))^{-(\alpha + 1)}}.
\]
Now, we consider two situations. First, we consider that $\omega/(2\sigma) \geq \omega^{1/4} > \y0$ (for large enough $\omega$). In this situation,
\begin{align*}
&(1 - \delta)^{-|\Oset|} \frac{1}{\sigma^{|\Oset^\mathsf{c}| }} \prod_{i \in \Oset } \frac{(1/\sigma) f(\omega/(2\sigma))}{ C_f \, (2b_i\omega)^{-1}(\log(2b_i\omega))^{-(\alpha + 1)}}\cr
 & \quad \leq (1 + \delta)^{|\Oset|} (1 - \delta)^{-|\Oset|} \frac{1}{\sigma^{|\Oset^\mathsf{c}| }} \prod_{i \in \Oset } \frac{(1/\sigma) C_f \, (\omega / (2\sigma))^{-1}(\log(\omega / (2\sigma)))^{-(\alpha + 1)}}{ C_f \, (2b_i\omega)^{-1}(\log(2b_i\omega))^{-(\alpha + 1)}} \cr
& \quad =   (1 + \delta)^{|\Oset|}  (1 - \delta)^{-|\Oset|} \frac{1}{\sigma^{|\Oset^\mathsf{c}| }} \prod_{i \in \Oset } (4 b_i)^{\alpha + 1} \left(\frac{1 + \frac{\log(2b_i)}{\log\omega}}{1 - \frac{\log(2\sigma)}{\log \omega}}\right)^{\alpha + 1} \cr
& \quad \leq   (1 + \delta)^{|\Oset|}  (1 - \delta)^{-|\Oset|} \frac{1}{\sigma^{|\Oset^\mathsf{c}| }} \prod_{i \in \Oset } (4 b_i)^{\alpha + 1} \left(\frac{1 + \frac{\log(2b_i)}{\log\omega}}{1 / 4}\right)^{\alpha + 1},
\end{align*}
using that $\log(2\sigma) \leq \log(\omega^{3/4}) = (3/4) \log(\omega)$. All terms in the final bound, except $1 / \sigma^{|\Oset^\mathsf{c}| }$, are constant with respect to $\sigma$ and bounded with respect to $\omega$. Second, we consider that $\omega/(2\sigma) < \omega^{1/4} \Leftrightarrow 1 / \sigma < 2 / \omega^{3/4} \leq 1$ (for large enough $\omega$). In this situation,
\begin{align*}
(1 - \delta)^{-|\Oset|} \frac{1}{\sigma^{|\Oset^\mathsf{c}| }} \prod_{i \in \Oset } \frac{(1/\sigma) f(\omega/(2\sigma))}{ C_f \, (2b_i\omega)^{-1}(\log(2b_i\omega))^{-(\alpha + 1)}} &\leq(1 - \delta)^{-|\Oset|} \frac{1}{\sigma^{|\Oset^\mathsf{c}|  - |\Oset|}} \prod_{i \in \Oset } \frac{4 C}{\omega^{3/2} C_f \, (2b_i\omega)^{-1}(\log(2b_i\omega))^{-(\alpha + 1)}} \cr
&\leq (1 - \delta)^{-|\Oset|}  \prod_{i \in \Oset } 8b_i (C / C_f) \frac{(\log(2b_i\omega))^{\alpha + 1}}{\omega^{1/2}},
\end{align*}
using that $f \leq C$ (under \autoref{ass:PDF}) and $1 / \sigma^2 \leq 4 /\omega^{3/2}$ in the first inequality, and that $\sigma^{-(|\Oset^\mathsf{c}|  - |\Oset|)} \leq 1$ given that $|\Oset^\mathsf{c}|  - |\Oset|\geq 0$ (\autoref{ass:sample_size}). All terms in the final bound are constant with respect to $\sigma$ and bounded with respect to $\omega$. Therefore, in both situations, there exists a constant $C_2 > 0$ such that
\[
  g(\sigma)^{|\Oset|} \, \frac{1}{\sigma^{|\Oset^\mathsf{c}|}} \prod_{i \in \Oset } \frac{(1/\sigma) f(\omega/(2\sigma))}{g(\sigma) f(2b_i\omega)} \leq C_2 \left(\frac{1}{\sigma^{|\Oset^\mathsf{c}| }} + 1\right).
\]
\end{proof}

\begin{Lemma}\label{lemma:2}
Suppose Assumptions \ref{ass:PDF} and \ref{ass:sample_size} hold. As $\omega\rightarrow \infty$,
\[
 \frac{1}{\omega^{n}} \prod_{i \in \Oset } f(2b_i\omega)^{-1} \rightarrow 0.
\]
\end{Lemma}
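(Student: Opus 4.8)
The plan is to treat the two cases of \autoref{ass:PDF} separately, in each case reducing the product to an explicit power of $\omega$ (times, in the second case, a polylogarithmic factor) using the tail equivalence supplied by \autoref{ass:PDF}, and then reading off the sign of the exponent from \autoref{ass:sample_size}. Throughout I would use that $2b_i\omega \rightarrow \infty$ for each $i \in \Oset$ since $b_i \geq 1$, and that $n = |\Oset^\mathsf{c}| + |\Oset|$.

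\textbf{Regularly varying case.} By \autoref{ass:PDF}(i), $f(2b_i\omega) \sim C_f (2b_i\omega)^{-(\alpha + 1)}$, hence $f(2b_i\omega)^{-1} \sim C_f^{-1}(2b_i\omega)^{\alpha + 1}$. Multiplying over $i \in \Oset$ and dividing by $\omega^n$,
\[
 \frac{1}{\omega^{n}} \prod_{i \in \Oset } f(2b_i\omega)^{-1} \sim \Bigg(C_f^{-|\Oset|}\prod_{i \in \Oset}(2b_i)^{\alpha + 1}\Bigg)\,\omega^{(\alpha + 1)|\Oset| - n} = \text{(const)}\,\omega^{\alpha|\Oset| - |\Oset^\mathsf{c}|}.
\]
The exponent $\alpha|\Oset| - |\Oset^\mathsf{c}|$ is strictly negative by \autoref{ass:sample_size}, so the right-hand side, and therefore the left-hand side, tends to $0$.

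\textbf{Log-regularly varying case.} The argument is the same with the tail form of \autoref{ass:PDF}(ii): $f(2b_i\omega)^{-1} \sim C_f^{-1}(2b_i\omega)(\log(2b_i\omega))^{\alpha + 1}$, so
\[
 \frac{1}{\omega^{n}} \prod_{i \in \Oset } f(2b_i\omega)^{-1} \sim \text{(const)}\,\omega^{|\Oset| - n}\prod_{i \in \Oset}(\log(2b_i\omega))^{\alpha + 1} = \text{(const)}\,\omega^{-|\Oset^\mathsf{c}|}\prod_{i \in \Oset}(\log(2b_i\omega))^{\alpha + 1}.
\]
Here \autoref{ass:sample_size} gives $|\Oset^\mathsf{c}| \geq |\Oset| \geq 1$ (the case $|\Oset| = 0$ being trivial, as the product is then empty and $n \geq 1$), so $\omega^{-|\Oset^\mathsf{c}|}$ decays polynomially while each $(\log(2b_i\omega))^{\alpha + 1}$ grows only polylogarithmically; choosing any $\varepsilon \in (0, |\Oset^\mathsf{c}|/|\Oset|)$ and using $\log(2b_i\omega) = o(\omega^{\varepsilon})$ shows the expression is $o(\omega^{|\Oset|\varepsilon - |\Oset^\mathsf{c}|}) \rightarrow 0$.

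\textbf{Expected obstacle.} There is no genuine obstacle; the only care needed is bookkeeping with the exponents and, in the log-regularly varying case, the observation that the polylogarithmic factor cannot cancel the polynomial decay $\omega^{-|\Oset^\mathsf{c}|}$. In the actual write-up I would likely replace the asymptotic equivalences by two-sided bounds valid for all $\omega$ large enough (for instance $f(2b_i\omega) \geq \tfrac{1}{2}C_f(2b_i\omega)^{-(\alpha + 1)}$ eventually, as already extracted in the proof of \autoref{lemma:1}), which yields a clean inequality chain and changes nothing essential.
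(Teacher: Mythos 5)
Your proposal is correct and follows essentially the same route as the paper: bound (or asymptotically replace) each $f(2b_i\omega)^{-1}$ by its tail form from \autoref{ass:PDF}, collect the powers of $\omega$ using $n = |\Oset^\mathsf{c}| + |\Oset|$, and conclude from \autoref{ass:sample_size} that the exponent forces convergence to zero, with the polylogarithmic factor harmless in the log-regularly varying case. The paper simply phrases the tail comparison as a two-sided bound $(1-\delta)C_f(\cdot) < f < (1+\delta)C_f(\cdot)$ valid for large arguments, exactly the refinement you anticipate in your final remark, so nothing essential differs.
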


\begin{proof}
  First, we prove the result for the case where $f$ is regularly varying. As shown in the proof of \autoref{lemma:1},
\[
\frac{1}{\omega^{n}} \prod_{i \in \Oset } \frac{1}{f(2b_i\omega)}  \leq (1 - \delta)^{-|\Oset|} \frac{1}{\omega^{n}} \prod_{i \in \Oset } \frac{1}{C_f \, (2b_i\omega)^{-(\alpha + 1)}} = (1 - \delta)^{-|\Oset|} C_f^{-|\Oset|}\left(\prod_{i \in \Oset} (2b_i)^{\alpha+1}\right) \frac{1}{\omega^{|\Oset^\mathsf{c}| - |\Oset|\alpha}} \rightarrow 0,
\]
using that $n = |\Oset^\mathsf{c}| + |\Oset|$ and $|\Oset^\mathsf{c}| > \alpha |\Oset|$ (\autoref{ass:sample_size}).

Now, we prove the result for the case where $f$ is log-regularly varying. The proof is similar. Again, as shown in the proof of \autoref{lemma:1},
\begin{align*}
\frac{1}{\omega^{n}} \prod_{i \in \Oset } \frac{1}{f(2b_i\omega)}  &\leq (1 - \delta)^{-|\Oset|} \frac{1}{\omega^{n}} \prod_{i \in \Oset } \frac{1}{C_f \, (2b_i\omega)^{-1}(\log(2b_i\omega))^{-(\alpha + 1)}} \cr
 &= (1 - \delta)^{-|\Oset|} (2 C_f)^{-|\Oset|}\left(\prod_{i \in \Oset} b_i\right) \frac{\prod_{i \in \Oset} (\log(2b_i\omega))^{\alpha + 1}}{\omega^{|\Oset^\mathsf{c}| }} \rightarrow 0,
\end{align*}
using that $n = |\Oset^\mathsf{c}| + |\Oset|$ and $|\Oset^\mathsf{c}| \geq |\Oset| \geq 1$ (\autoref{ass:sample_size}).
\end{proof}

\begin{Lemma}\label{lemma:3}
For $Z_{\sigma_0} \sim \mathcal{N}(0, \sigma_0^2)$ with $\sigma_0 > 0$ a constant,
\[
 \Prob(Z_{\sigma_0} \geq t) \leq \frac{1}{\sqrt{2 \pi}} \frac{\sigma_0}{t} \exp\left(-\frac{t^2}{2\sigma_0^2}\right), \quad t > 0.
\]
\end{Lemma}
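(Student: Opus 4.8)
The inequality is the classical Mills-ratio (Gaussian tail) bound, and the natural route is an elementary estimate on the Gaussian density integral, exploiting that on the region of integration the variable dominates the lower endpoint. I would prove it directly for $Z_{\sigma_0}$ rather than reducing to the standard normal, since the computation is equally short either way; alternatively, one could first establish it for $\sigma_0 = 1$ and then use the scaling identity $\Prob(Z_{\sigma_0} \geq t) = \Prob(Z_1 \geq t/\sigma_0)$.

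\textbf{Key steps.} First, write the tail probability as an integral of the density:
\[
 \Prob(Z_{\sigma_0} \geq t) = \int_t^\infty \frac{1}{\sqrt{2\pi}\,\sigma_0} \exp\left(-\frac{u^2}{2\sigma_0^2}\right) \d u.
\]
Second, observe that for every $u \geq t > 0$ we have $u/t \geq 1$, so the integrand is bounded above by $\frac{u}{t} \cdot \frac{1}{\sqrt{2\pi}\,\sigma_0} \exp(-u^2/(2\sigma_0^2))$. Third, integrate this majorant explicitly: since an antiderivative of $u \exp(-u^2/(2\sigma_0^2))$ is $-\sigma_0^2 \exp(-u^2/(2\sigma_0^2))$, one gets
\[
 \int_t^\infty \frac{u}{t}\,\frac{1}{\sqrt{2\pi}\,\sigma_0} \exp\left(-\frac{u^2}{2\sigma_0^2}\right) \d u = \frac{1}{\sqrt{2\pi}}\,\frac{\sigma_0}{t}\,\exp\left(-\frac{t^2}{2\sigma_0^2}\right),
\]
which is exactly the claimed bound.

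\textbf{Main obstacle.} There is essentially no obstacle here; the only thing to be careful about is that the bound $u/t \geq 1$ requires $t > 0$ (which is part of the hypothesis) and that the improper integral of the majorant converges, which it clearly does because of the Gaussian factor. The proof is a one-line estimate followed by an elementary antiderivative computation.
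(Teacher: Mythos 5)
Your proof is correct and uses essentially the same argument as the paper: the bound $u/t \geq 1$ on the integration range followed by an explicit antiderivative, the only (immaterial) difference being that the paper first standardizes via $\Prob(Z_{\sigma_0} \geq t) = \Prob(Z \geq t/\sigma_0)$ and proves the bound for $Z \sim \mathcal{N}(0,1)$, whereas you work directly with the $\mathcal{N}(0,\sigma_0^2)$ density.
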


\begin{proof}
 We have that
 \[
  \Prob(Z_{\sigma_0} \geq t) = \Prob(Z \geq t / \sigma_0),
 \]
 where $Z \sim \mathcal{N}(0, 1)$. We prove that
 \[
 \Prob(Z \geq z) \leq \frac{1}{\sqrt{2 \pi}} \frac{1}{z} \exp\left(-\frac{z^2}{2}\right), \quad z > 0.
\]
The result is obtained by replacing $z$ by $ t / \sigma_0$. We have that
\begin{align*}
 \Prob(Z \geq z) = \frac{1}{\sqrt{2 \pi}} \int_z^\infty \exp\left(-\frac{x^2}{2}\right) \, \d x \leq \frac{1}{\sqrt{2 \pi}} \int_z^\infty \frac{x}{z} \exp\left(-\frac{x^2}{2}\right) \, \d x = \frac{1}{\sqrt{2 \pi}} \frac{1}{z} \exp\left(-\frac{z^2}{2}\right),
\end{align*}
given that, for all $x \geq z > 0$, $1 \leq x / z$.
\end{proof}

\section{Simple proof in the context of GLMs}\label{sec:robustGLM}

In this section, we present an application of the proof technique of \autoref{sec:proof} in another context than linear regression, namely in the context of generalized linear models (GLMs). More precisely, using the same technique, we prove a robustness characterization result for a robust heavy-tailed version of gamma GLM. This robust model has been introduced and studied in \cite{gagnon2023GLM}. The motivation for using this model is the same as for using the robust linear regression models presented in this paper: gamma GLM is non-robust to outliers and thus a robust version is useful in situations where the data set at hand is suspected to contain outliers. In \cite{gagnon2023GLM}, a robustness characterization result is presented, but again the proof is highly technical and lengthy, due to the generality of the prior distribution. Here, we present a significantly simpler and intuitive proof by leveraging a specific prior distribution structure, as in \autoref{sec:proof}. The prior PDF allows, in particular, to exploit a tail bound.

\subsection{Model definition}\label{sec:GLMmodel}

As in the context of linear regression, we assume that we have access to a data set of the form $\{\bx_i, y_i\}$, where $\bx_1, \ldots, \bx_n \in \re^p$ are vectors of explanatory variable data points and $y_1, \ldots, y_n$ are observations of a dependent variable. In the case of gamma GLM, however, it is assumed that $y_1, \ldots, y_n > 0$. Also, it is assumed that $y_1, \ldots, y_n$ are realizations of $n$ random variables $Y_1, \ldots, Y_n$, where $Y_i / \mu_i \sim f_{\nu, c}$ with $\mu_i = \exp(\bx_i^T \bbeta)$ and $f_{\nu, c}$ is a heavy-tailed version of the gamma PDF:
\begin{align}\label{eq:proposed}
 f_{\nu, c}(z) = \begin{cases}
    f_{\text{mid}}(z) := \exp(-\nu z) z^{\nu - 1} \nu^\nu / \Gamma(\nu) \quad \text{if} \quad \zl \leq z \leq \zr, \cr
    f_{\text{right}}(z) := f_{\text{mid}}(\zr) \frac{\zr}{z} \left(\frac{\log \zr}{\log z}\right)^{\lambdar} \quad \text{if} \quad z > \zr, \cr
    f_{\text{left}}(z) := f_{\text{mid}}(\zl) \frac{\zl}{z} \left(\frac{\log \zl}{\log z}\right)^{\lambdal} = f_{\text{mid}}(\zl) \frac{\zl}{z} \left(\frac{\log(1 / \zl)}{\log(1 / z)}\right)^{\lambdal} \quad \text{if} \quad 0 < z < \zl,
 \end{cases}
\end{align}
$\zr, \lambdar, \zl$ and $\lambdal$ being functions of $\nu > 0$ and $c > 0$ given by
\begin{align*}
 & \zr := 1 + c / \sqrt{\nu}, \quad \zl := \begin{cases}
                                                                0 \quad \text{if} \quad \nu \leq 1, \cr
                                                                \max\{0, 1 - c / \sqrt{\nu}\} \quad \text{if} \quad \nu > 1,
                                                            \end{cases} \cr
 & \lambdar := 1 + \frac{f_{\text{mid}}(\zr) \log(\zr) \, \zr}{\Prob(Z_\nu > \zr)}, \quad \text{and} \quad \lambdal := 1 - \frac{f_{\text{mid}}(\zl) \log(\zl) \, \zl}{\Prob(Z_\nu < \zl)} = 1 + \frac{f_{\text{mid}}(\zl) \log(1 / \zl) \, \zl}{\Prob(Z_\nu < \zl)}.
\end{align*}
The random variable $Z_\nu$ follows a gamma distribution whose mean and shape parameters are 1 and $\nu$, respectively. For a detailed description of the model, see \cite{gagnon2023GLM}. Gamma GLM is retrieved by setting $\zl = 0$ and $\zl = +\infty$. The model is parametrized by using a mean parameter $\mu_i$ and a shape parameter $\nu$, both of which being considered unknown. In $f_{\nu, c}$, $c$ is a tuning parameter typically chosen by the user that allows to reach a compromise between efficiency and robustness. In \cite{gagnon2023GLM}, it is identified that $c = 1.6$ offers a good balance between efficiency and robustness.

\subsection{Robustness characterization result}\label{sec:resultGLM}

As in \autoref{sec:result}, to characterize the robustness of the model presented in \autoref{sec:GLMmodel} we consider an asymptotic regime where the outliers move further and further away from the bulk of the data along particular paths. In the context of gamma GLM, we consider that the outliers $(\bx_i, y_i)$ are such that $y_i \rightarrow \infty$ or $y_i \rightarrow 0$ with $\bx_i$ being kept fixed (but perhaps extreme). We refer to a couple $(\bx_i, y_i)$ with $y_i \rightarrow \infty$ as a \textit{large} outlier, and to a couple with $y_i \rightarrow 0$ as a \textit{small} outlier. The $y_i$ component is referred to as a \textit{large/small} outlying observation. We consider that each outlying observation goes to $\infty$ or 0 as its own specific rate. More specifically, for a large outlying observation, we consider that $y_i = b_i \omega$, and that $y_i = 1 / b_i \omega$ for a small outlying observation, with $b_i \geq 1$ a constant, and we let $\omega \rightarrow \infty$. For each non-outlying observation, we assume that $y_i = a_i$, where $a_i > 0$ is a constant.

As mentioned in \autoref{sec:result}, the limiting behaviour of the conditional PDF of $Y_i$ evaluated at an outlying point is central to the characterization of the robustness properties. We now present a proposition about this limiting behaviour in the case of robust gamma GLM.

\begin{Proposition}\label{prop:limit_PDFGLM}
For all $c > 0$, $\nu > 0$ and $\bbeta \in \re^p$,
\[
 \lim_{y_i \rightarrow \infty} \frac{ f_{\nu,c}(y_i/\mu_i)/\mu_i}{f_{\nu,c}(y_i)} = 1,
 \]
 recalling that $\mu_i = \exp(\bx_i^T \bbeta)$. If $\nu>1$ and $c<\sqrt{\nu}$ (the condition under which $f_{\text{left}}$ exists),
 \[
 \lim_{y_i \rightarrow 0} \frac{ f_{\nu,c}(y_i/\mu_i)/\mu_i}{f_{\nu,c}(y_i)} = 1.
 \]
\end{Proposition}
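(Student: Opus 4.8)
The plan is to follow the same strategy as in the log-regularly varying case of \autoref{prop:limit_PDF}, exploiting that both the right tail $f_{\text{right}}$ and the left tail $f_{\text{left}}$ of $f_{\nu, c}$ in \eqref{eq:proposed} are log-regularly varying functions — of the respective forms $(\text{const}) \cdot z^{-1}(\log z)^{-\lambdar}$ as $z \to \infty$ and $(\text{const}) \cdot z^{-1}(\log(1/z))^{-\lambdal}$ as $z \to 0$. Rescaling the argument by $1/\mu_i$ and multiplying the density by $1/\mu_i$ leaves the algebraic $z^{-1}$ part effectively unchanged and only perturbs the slowly varying logarithmic factor, which washes out in the limit. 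The one point that requires a (trivial) argument rather than pure computation is that, along each of the two limits, $y_i$ and $y_i/\mu_i$ eventually lie in the same piece of the piecewise definition of $f_{\nu, c}$; this is immediate because $\mu_i = \exp(\bx_i^T \bbeta)$ is a fixed positive constant once $\bbeta$ is fixed.

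Concretely, for the first limit I would fix $\bbeta \in \re^p$, note that since $\zr < \infty$ there is, for all $y_i$ large enough, simultaneously $y_i > \zr$ and $y_i/\mu_i > \zr$, then substitute $f_{\nu, c}(y_i) = f_{\text{right}}(y_i)$ and $f_{\nu, c}(y_i/\mu_i) = f_{\text{right}}(y_i/\mu_i)$ and simplify. The leading factor $1/\mu_i$ cancels the $\mu_i$ hidden in $\zr/(y_i/\mu_i)$, the common factors $f_{\text{mid}}(\zr)$, $\zr$, $(\log \zr)^{\lambdar}$ and $y_i^{-1}$ cancel, and what remains is $\bigl(\log y_i / \log(y_i/\mu_i)\bigr)^{\lambdar} = \bigl(\log y_i /(\log y_i - \log \mu_i)\bigr)^{\lambdar}$, which tends to $1$ as $y_i \to \infty$ because $\log y_i \to \infty$ and $\log \mu_i$ is constant.

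For the second limit I would additionally assume $\nu > 1$ and $c < \sqrt{\nu}$, so that $\zl = 1 - c/\sqrt{\nu} \in (0,1)$ and $f_{\text{left}}$ is well defined; then for all $y_i$ small enough both $y_i < \zl$ and $y_i/\mu_i < \zl$, and the same substitution using the $\log(1/z)$ form of $f_{\text{left}}$ yields, after the analogous cancellations, $\bigl(\log(1/y_i)/\log(\mu_i/y_i)\bigr)^{\lambdal} = \bigl(-\log y_i/(\log \mu_i - \log y_i)\bigr)^{\lambdal} \to 1$ as $y_i \to 0$, since $-\log y_i \to \infty$. I do not expect any genuine obstacle: the statement is essentially a restatement of the log-regular variation of the two tails of $f_{\nu,c}$, and the only care needed is the (immediate) verification that both arguments eventually fall in the correct branch of the piecewise definition.
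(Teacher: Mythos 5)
Your proposal is correct: since $\mu_i=\exp(\bx_i^T\bbeta)$ is a fixed positive constant, both $y_i$ and $y_i/\mu_i$ eventually lie in the right (resp.\ left) branch of \eqref{eq:proposed}, the factor $1/\mu_i$ cancels the $\mu_i$ in $\zr/(y_i/\mu_i)$ (resp.\ $\zl/(y_i/\mu_i)$), and the ratio reduces to $\bigl(\log y_i/(\log y_i-\log\mu_i)\bigr)^{\lambdar}\rightarrow 1$ and $\bigl(-\log y_i/(\log\mu_i-\log y_i)\bigr)^{\lambdal}\rightarrow 1$; the needed side conditions ($\zr>1$ so $\log\zr>0$, and $\zl=1-c/\sqrt{\nu}\in(0,1)$ under $\nu>1$, $c<\sqrt{\nu}$) all hold. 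The paper itself does not prove this proposition but defers to \cite{gagnon2023GLM}; your direct computation is the natural argument and matches the same branch-wise cancellations the paper uses later when bounding $f_{\nu,c}(y_i/\mu_i)/(\mu_i f_{\nu,c}(y_i))$ by $4^{\lambdal}$ in the proof of \autoref{thm:robustnessGLM}, so no gap.
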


See \cite{gagnon2023GLM} for the proof of \autoref{prop:limit_PDFGLM}. There is an important difference between \autoref{prop:limit_PDFGLM} and \autoref{prop:limit_PDF}. The term $f_{\nu,c}(y_i)$ in the denominator in the limits above cannot be written as a product of two terms with one depending on $\nu$ but not on $y_i$ and the other one depending on $y_i$ but not on $\nu$. In \autoref{prop:limit_PDF}, the term in the denominator in the limit is $g(\sigma) f(y_i)$, thus being a product of two terms with one depending on $\sigma$ but not on $y_i$ and the other one depending on $y_i$ but not on $\sigma$. In other words, we are able to separate the parameters from the limiting object, which allows to proceed using our proof technique. To prove a robustness characterization result for the model in \autoref{sec:GLMmodel} (using this proof technique), we consider a simplifying situation as in \cite{gagnon2023GLM} where the parameter $\nu$ is considered known and fixed, like $c$; the only unknown parameter is thus $\bbeta$. The prior and posterior are thus about this parameter only, and they will be denoted by $\pi$ and $\pi_\omega(\, \cdot \mid \by)$, respectively. We further simplify by considering that $\nu$ is such that $\nu > 1$ and $c < \sqrt{\nu}$ to ensure the existence of both tails in our model, noting that $\nu > 1$ corresponds to the gamma PDF shape that often is sought for and supported by the data in applications (e.g., in actuarial science). The simplifying situation with $\nu$ fixed can be seen as an approximation of that where $\nu$ is considered unknown and random, but with a posterior mass that concentrates strongly around a specific value. The result that is obtained suggests that the posterior density when both $\bbeta$ and $\nu$ are considered unknown asymptotically behaves like one where the PDF terms of the outlying data points are each replaced by $f_{\nu,c}(y_i)$.

We now present our assumption on the prior distribution which will allow to proceed with a simple proof for the robustness characterization result.
\begin{Assumption}\label{ass:priorGLM}
   The components of $\bbeta = (\beta_1, \ldots, \beta_p)^T$ are independent and each $\beta_j$ has a sub-exponential distribution.
\end{Assumption}

In our Bayesian framework, it is assumed that the random variables $Y_1, \ldots, Y_n$ are conditionally independent given $\bbeta$. Therefore,
\[
 \pi_\omega(\bbeta \mid \by) = \pi(\bbeta) \prod_{i = 1}^n \frac{1}{\mu_i} f_{\nu, c}\left(\frac{y_i}{\mu_i}\right) \Bigg/ m(\by), \quad \bbeta \in \re^p,
\]
where
\[
 m_\omega(\by) = \int_{\re^p} \pi(\bbeta) \prod_{i = 1}^n \frac{1}{\mu_i} f_{\nu, c}\left(\frac{y_i}{\mu_i}\right)  \d\bbeta,
\]
if $m_\omega(\by) < \infty$.

We now present definitions that will allow to state the robustness characterization result. As in \autoref{sec:result}, let us define the index set of outlying data points by $\Oset$. The index set of non-outlying data points is thus given by: $\Oset^\mathsf{c} := \{1, \ldots, n\} \setminus \Oset$. We also define the set of non-outlying observations: $\by_{\Oset^\mathsf{c}} := \{y_i: i \in \Oset^\mathsf{c}\}$. A conclusion of our theoretical result is a convergence of the posterior distribution towards $\pi(\, \cdot \mid \by_{\Oset^\mathsf{c}})$, which has a density defined as follows:
\[
 \pi(\bbeta \mid \by_{\Oset^\mathsf{c}}) = \pi(\bbeta) \prod_{i \in \Oset^\mathsf{c}} \frac{1}{\mu_i} f_{\nu, c}\left(\frac{y_i}{\mu_i}\right) \Bigg/ m(\by_{\Oset^\mathsf{c}}), \quad \bbeta \in \re^p,
\]
where
\[
 m(\by_{\Oset^\mathsf{c}}) = \int_{\re^p} \pi(\bbeta) \prod_{i \in \Oset^\mathsf{c}} \frac{1}{\mu_i} f_{\nu, c}\left(\frac{y_i}{\mu_i}\right) \d\bbeta,
\]
if $m(\by_{\Oset^\mathsf{c}}) < \infty$.

As in \autoref{sec:result}, to obtain a convergence result, we need a guarantee that the aforementioned posterior distributions are proper. We now provide such a guarantee.

\begin{Proposition}\label{prop:properGLM}
   Suppose that \autoref{ass:priorGLM} holds. Then, $m(\mathbf{y}_{\Oset^\mathsf{c}}) < \infty$ and $m_\omega(\by) < \infty$ for all $\omega$.
\end{Proposition}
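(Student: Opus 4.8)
The plan is to show finiteness of each marginal density by bounding the heavy-tailed likelihood terms by constants and then reducing to the integrability of the prior $\pi(\bbeta)$, which is automatic since $\pi$ is a probability density. This mirrors the proof of \autoref{prop:proper}, but is even simpler here because there is no scale parameter $\sigma$ to worry about: the only potential source of non-integrability in \autoref{prop:proper} was the factor $\sigma^{-n}$ times $g(\sigma)^{|\Oset|}$, which forced the inverse-moment condition (\autoref{ass:sample_size} / inverse-gamma inverse moments). In the GLM setting there is no such factor, so the argument collapses to a one-line bound.

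First I would note that $f_{\nu,c}$ is bounded, say $f_{\nu,c} \leq C$ for some constant $C > 0$. This is immediate from the explicit form in \eqref{eq:proposed}: on the middle piece $f_{\text{mid}}$ is a continuous function on the compact interval $[\zl, \zr]$ (with the convention that it extends continuously, and is bounded near $0$ when $\zl = 0$ since $\nu > 1$ forces $z^{\nu-1}$ to vanish there, while for $0 < \nu \le 1$ we have $\zl = 0$ but the right tail takes over), and on the two log-Pareto tails $f_{\text{right}}$ and $f_{\text{left}}$ the factors $\zr/z$, $\zl/z$ and the log-ratios are all at most $1$ for $z > \zr$ and $z < \zl$ respectively, so each tail piece is bounded by $f_{\text{mid}}(\zr)$ or $f_{\text{mid}}(\zl)$. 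Hence the uniform bound $f_{\nu,c} \leq C$ holds. Crucially, $C$ depends only on $\nu$ and $c$, which are fixed.

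Then I would simply write
\begin{align*}
 m_\omega(\by) = \int_{\re^p} \pi(\bbeta) \prod_{i=1}^n \frac{1}{\mu_i} f_{\nu,c}\!\left(\frac{y_i}{\mu_i}\right) \d\bbeta.
\end{align*}
Here $\mu_i = \exp(\bx_i^T\bbeta)$, so $1/\mu_i = \exp(-\bx_i^T\bbeta)$ is unbounded in $\bbeta$; this is the one point requiring a little care, and is the main obstacle. To handle it, I would use the density bound on the \emph{whole} term rather than just on $f_{\nu,c}$: for any fixed $\nu, c$ and any $u > 0$, the map $u \mapsto u^{-1} f_{\nu,c}(y/u)$ is, for fixed $y > 0$, itself a bounded function of $u$ — indeed it equals the density at $y$ of the random variable $u Z$ where $Z \sim f_{\nu,c}$, evaluated as a function of the scale $u$, and one checks directly from \eqref{eq:proposed} that $\sup_{u>0} u^{-1} f_{\nu,c}(y/u) =: C'(y) < \infty$ for each fixed $y>0$ (the $z^{\nu-1}$ growth with $\nu>1$ controls the behaviour as $u \to \infty$, i.e. as the argument $y/u \to 0$, and the log-Pareto right tail controls $u \to 0$). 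Applying this with $u = \mu_i$ gives $\mu_i^{-1} f_{\nu,c}(y_i/\mu_i) \leq C'(y_i)$ uniformly in $\bbeta$, whence
\begin{align*}
 m_\omega(\by) \leq \left(\prod_{i=1}^n C'(y_i)\right) \int_{\re^p} \pi(\bbeta)\, \d\bbeta = \prod_{i=1}^n C'(y_i) < \infty,
\end{align*}
since $\pi$ integrates to $1$ under \autoref{ass:priorGLM}. The identical argument with the product restricted to $i \in \Oset^\mathsf{c}$ and with the values $y_i$, $i \in \Oset^\mathsf{c}$, fixed gives $m(\by_{\Oset^\mathsf{c}}) \leq \prod_{i \in \Oset^\mathsf{c}} C'(y_i) < \infty$, completing the proof. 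The sub-exponential hypothesis on the $\beta_j$ is not even needed for this proposition — only that $\pi$ is a proper prior — but it is stated for use in the subsequent convergence result.
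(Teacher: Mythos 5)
Your argument is essentially the paper's: the key step --- bounding each factor $\mu_i^{-1} f_{\nu,c}(y_i/\mu_i)$ uniformly in $\bbeta$ by a finite constant depending only on $y_i$, and then using that the proper prior integrates to one under \autoref{ass:priorGLM} --- is exactly what the paper does, except that the paper simply invokes \autoref{lemma:1GLM}, which gives your $C'(y)$ explicitly as $(\e^{-1}\nu)^\nu/(y\Gamma(\nu))$ (the map $\mu \mapsto f_{\nu,c}(y/\mu)/\mu$ being unimodal with mode at $\mu = y$). Two small inaccuracies in your preliminaries, neither of which affects the final bound: the claim that $f_{\nu,c}$ is globally bounded is false when the left tail exists (the case $\nu>1$, $c<\sqrt{\nu}$ assumed in this section), since $f_{\text{left}}(z) \asymp z^{-1}(\log(1/z))^{-\lambdal} \rightarrow \infty$ as $z \rightarrow 0^{+}$; your statement that $\zl/z \leq 1$ for $z < \zl$ is backwards. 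Correspondingly, in justifying $\sup_{u>0} u^{-1} f_{\nu,c}(y/u) < \infty$, the behaviour as $u \rightarrow \infty$ is governed by this left tail (giving $u^{-1}f_{\nu,c}(y/u) \asymp (\log(u/y))^{-\lambdal}/y \rightarrow 0$), not by the factor $z^{\nu-1}$ of $f_{\text{mid}}$. Since your displayed bound only uses $C'(y) < \infty$, which is precisely the content of \autoref{lemma:1GLM}, the proof stands, and your remark that only properness of the prior (not sub-exponentiality) is needed here matches the paper.
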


\begin{proof}
 We have that
 \begin{align*}
  m_\omega(\by) = \int_{\re^p} \pi(\bbeta) \prod_{i = 1}^n \frac{1}{\mu_i} f_{\nu, c}\left(\frac{y_i}{\mu_i}\right)  \d\bbeta \leq \prod_{i = 1}^n \frac{(\e^{-1}\nu)^\nu}{y_i \Gamma(\nu)} \int_{\re^p} \pi(\bbeta) \, \d\bbeta = \prod_{i = 1}^n \frac{(\e^{-1}\nu)^\nu}{y_i \Gamma(\nu)},
 \end{align*}
 which is finite for all $\omega$ (recalling that $y_i = b_i \omega$ for a large outlying observation, that $y_i = 1 / b_i \omega$ for a small outlying observation, and that $y_i = a_i$ for a non-outlying observation). In the inequality, we used \autoref{lemma:1GLM} (presented below in \autoref{sec:lemmas_GLM}) and, in the final equality, we used \autoref{ass:priorGLM}.

 The proof that $m(\mathbf{y}_{\Oset^\mathsf{c}}) < \infty$ is similar.
\end{proof}

We are now ready to present the robustness characterization result.

\begin{Theorem}\label{thm:robustnessGLM}
 Assume that $\nu$ is fixed and such that $\nu > 1$ and $c < \sqrt{\nu}$. Suppose that \autoref{ass:priorGLM} holds. As $\omega \rightarrow \infty$,
 \begin{enumerate}

  \item[(a)] the asymptotic behaviour of the marginal distribution is: $m_\omega(\by) / \prod_{i \in \Oset} f_{\nu,c}(y_i) \rightarrow m(\by_{\Oset^\mathsf{c}})$;

  \item[(b)] the posterior density converges pointwise: for any $\bbeta \in \re^p, \pi_\omega(\bbeta \mid \by) \rightarrow \pi(\bbeta \mid \by_{\Oset^\mathsf{c}})$;

  \item[(c)] the posterior distribution converges: $\pi_\omega(\, \cdot \mid \by) \rightarrow \pi(\, \cdot \mid \by_{\Oset^\mathsf{c}})$.
 \end{enumerate}
\end{Theorem}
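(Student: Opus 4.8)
The plan is to mirror the three-part structure of the proof of \autoref{Thm} in \autoref{sec:proof}, since \autoref{thm:robustnessGLM} is structurally identical. As before, Result~(c) follows from Result~(b) by Scheffé's lemma, and Result~(b) follows from Result~(a) together with \autoref{prop:limit_PDFGLM}: writing
\[
 \pi_\omega(\bbeta \mid \by) = \pi(\bbeta \mid \by_{\Oset^\mathsf{c}}) \, \frac{m(\by_{\Oset^\mathsf{c}}) \prod_{i \in \Oset} f_{\nu,c}(y_i)}{m_\omega(\by)} \prod_{i \in \Oset} \frac{f_{\nu,c}(y_i/\mu_i)/\mu_i}{f_{\nu,c}(y_i)},
\]
each factor after $\pi(\bbeta \mid \by_{\Oset^\mathsf{c}})$ tends to $1$ pointwise (the middle one by Result~(a), the product by \autoref{prop:limit_PDFGLM}, noting that large and small outliers are handled by the two limits in that proposition). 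So the real work, as in \autoref{sec:proof}, is Result~(a), i.e.\ showing $I(\omega) := m_\omega(\by)/(m(\by_{\Oset^\mathsf{c}})\prod_{i \in \Oset} f_{\nu,c}(y_i)) \to 1$.

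For Result~(a) I would rewrite $I(\omega)$ exactly as in \autoref{sec:proof} as an integral of $\pi(\bbeta \mid \by_{\Oset^\mathsf{c}}) \prod_{i \in \Oset} \frac{f_{\nu,c}(y_i/\mu_i)/\mu_i}{f_{\nu,c}(y_i)}$ over $\re^p$, then split the domain with an indicator of a set $S(\omega)$ analogous to the linear-regression one — here the natural choice is $S(\omega) := \bigcap_{i=1}^n \{\bbeta : |\bx_i^T\bbeta| \le (\log\omega)/2\}$ (or some slowly growing bound), since $\mu_i = \exp(\bx_i^T\bbeta)$ and we want $y_i/\mu_i$ to still be of a large order on $S(\omega)$ for large outliers and of a small order for small outliers. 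On $S(\omega)$, I would bound the ratio $\frac{f_{\nu,c}(y_i/\mu_i)/\mu_i}{f_{\nu,c}(y_i)}$ uniformly in $\omega$ using the monotonicity/tail form of $f_{\nu,c}$ (the log-Pareto tails, like the LPTN case of \autoref{lemma:1}) — this will be the GLM analogue of \autoref{lemma:1GLM}, giving an $\omega$-free integrable dominating function $C\,\pi(\bbeta)$ (times possibly a bounded factor), so dominated convergence plus \autoref{prop:limit_PDFGLM} yields $I_1(\omega) \to 1$. On $S(\omega)^\mathsf{c}$, I would bound $f_{\nu,c}(y_i/\mu_i)/\mu_i \le (\e^{-1}\nu)^\nu/(y_i\Gamma(\nu))$ via \autoref{lemma:1GLM} and $1/f_{\nu,c}(y_i) \le$ (a log-Pareto expression in $b_i\omega$), reducing $I_2(\omega)$ to a constant times $\big(\prod_{i\in\Oset} f_{\nu,c}(y_i)^{-1}\big)\,\Prob\big(\bigcup_i \{|\bx_i^T\bbeta| > (\log\omega)/2\}\big)$; by the union bound and the sub-exponential assumption (\autoref{ass:priorGLM}), each $\Prob(|\bx_i^T\bbeta| > (\log\omega)/2)$ decays like $\exp(-c'\log\omega) = \omega^{-c'}$ for any desired $c'$ (a sub-exponential tail bound, the analogue of \autoref{lemma:3}), which beats the polynomial-in-$\log\omega$-over-$\omega$-type growth of $\prod_{i\in\Oset} f_{\nu,c}(y_i)^{-1}$, so $I_2(\omega)\to 0$ and hence $I(\omega)\to 1$.

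The main obstacle I anticipate is the choice and handling of $S(\omega)$: because the link is exponential, $\bx_i^T\bbeta$ enters as $\mu_i = e^{\bx_i^T\bbeta}$, so a linear threshold $|\bx_i^T\bbeta| \le \omega/2$ as in the linear case would be far too generous (it would not keep $y_i/\mu_i$ large), and a too-tight threshold would make $S(\omega)^\mathsf{c}$ too big for the sub-exponential tail to kill. A threshold growing like $\log\omega$ should be the sweet spot — on $S(\omega)$, for a large outlier $y_i/\mu_i \ge b_i\omega\, e^{-(\log\omega)/2} = b_i\omega^{1/2}\to\infty$, and for a small outlier $y_i/\mu_i \le \omega^{-1/2}/b_i\to 0$, so \autoref{prop:limit_PDFGLM} applies and the tail bound \autoref{lemma:1GLM} controls the ratio — while on $S(\omega)^\mathsf{c}$ the sub-exponential tail gives $\Prob \le \exp(-c'\log\omega)$, and choosing the sub-exponential norm / constant appropriately makes $c'$ large enough. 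The rest is bookkeeping: verifying that the dominating bound on $S(\omega)$ is genuinely $\omega$-independent (the $\log$-Pareto ratio $\big(\log(2b_i\omega)/\log(y_i/\mu_i)\big)^{\lambdar}$ must stay bounded, which it does since both logs are of order $\log\omega$ on $S(\omega)$), and that the small-outlier case is handled symmetrically using $f_{\text{left}}$ and the $\nu>1$, $c<\sqrt\nu$ hypothesis. I would state these bounds as a lemma (\autoref{lemma:1GLM}) in \autoref{sec:lemmas_GLM}, paralleling \autoref{lemma:1} and \autoref{lemma:2}.
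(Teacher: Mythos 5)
Your strategy is the paper's strategy, essentially step for step: Result (c) from (b) by Scheff\'e, (b) from (a) together with \autoref{prop:limit_PDFGLM}, and (a) by writing $I(\omega)$ as an integral against $\pi(\bbeta\mid\by_{\Oset^\mathsf{c}})$, splitting on $S(\omega)=\bigcap_i\{\bbeta:|\bx_i^T\bbeta|\leq \log(\omega)/2\}$, applying dominated convergence on $S(\omega)$ (where indeed $y_i/\mu_i\geq b_i\omega^{1/2}$ for large outliers, $y_i/\mu_i\leq \omega^{-1/2}/b_i$ for small ones, and the ratio of logarithms in the Pareto tails is bounded by a constant such as $4^{\lambdal}$), and using the union bound plus the sub-exponential tail bound (\autoref{lemma:2GLM}) on $S(\omega)^\mathsf{c}$, with \autoref{lemma:1GLM} supplying the uniform bound $(\e^{-1}\nu)^\nu/(y\Gamma(\nu))$ on the likelihood factors. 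The only organizational difference is that the paper derives the on-$S(\omega)$ ratio bound directly in the proof rather than as a separate lemma.

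There is, however, one genuine slip in your rate comparison on $S(\omega)^\mathsf{c}$. You assert that $\Prob(|\bx_i^T\bbeta|>\log(\omega)/2)$ decays like $\omega^{-c'}$ ``for any desired $c'$'' and that one can ``choose the sub-exponential norm / constant appropriately'' to make $c'$ large. Neither is available: $c_1$ in \autoref{lemma:2GLM} is an absolute constant and $K=\max_j\|\beta_j-\E[\beta_j]\|_{\psi_1}$ is dictated by the prior, so the exponent $c'$ is a fixed, possibly small, number. This matters because your displayed reduction of $I_2(\omega)$ to ``a constant times $\big(\prod_{i\in\Oset}f_{\nu,c}(y_i)^{-1}\big)\Prob(\cdot)$'' absorbs the $\omega$-dependent factors $1/y_i$ coming from \autoref{lemma:1GLM} into the constant; with that bookkeeping, $\prod_{i\in\Oset}f_{\nu,c}(y_i)^{-1}$ grows like $\omega^{|\Oset|}$ times powers of $\log\omega$ (for a large outlier $f_{\text{right}}(y_i)^{-1}\propto y_i(\log y_i)^{\lambdar}$), and a fixed $\omega^{-c'}$ need not dominate it. The paper resolves this by keeping the $1/y_i$ factors: the relevant quantity is $\prod_{i\in\Oset}\big[(\e^{-1}\nu)^\nu/(y_i\Gamma(\nu))\big]/f_{\nu,c}(y_i)$, in which the $y_i$'s cancel, so it grows only polynomially in $\omega_0=\log\omega$, while the tail probability decays exponentially in $\omega_0$; then any fixed $c'>0$ suffices and no tuning of constants is needed. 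With that correction of bookkeeping (not of strategy), your argument coincides with the paper's proof.
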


\begin{proof}
 We proceed as in the proof of \autoref{Thm} and show how the key steps in it are adapted to prove a robustness characterization result for another model than linear regression. We start with the proof of Result (c) (assuming Result (b)). Next, we prove Result (b) (assuming Result (a)). Finally, we provide the proof of Result (a), which is longer.

  Result (c) is a direct consequence of Result (b) by Scheffé's lemma. To prove Result (b), we rewrite $\pi_\omega(\bbeta \mid \by)$ for fixed $\bbeta \in \re^p$ in order to exploit Result (a) and \autoref{prop:limit_PDFGLM}:
        \[
 \pi_\omega(\bbeta \mid \by) = \pi(\bbeta\mid\by_{\Oset^\mathsf{c}}) \, \frac{m(\by_{\Oset^\mathsf{c}}) \prod_{i \in \Oset} f_{\nu,c}(y_i)}{m_\omega(\by)} \prod_{i \in \Oset } \frac{f_{\nu,c}(y_i/\mu_i)/\mu_i}{f_{\nu,c}(y_i)}.
\]
 For any $\bbeta \in \re^p$,
\[
  \frac{m(\by_{\Oset^\mathsf{c}}) \prod_{i \in \Oset} f_{\nu,c}(y_i)}{m_\omega(\by)} \prod_{i \in \Oset } \frac{f_{\nu,c}(y_i/\mu_i)/\mu_i}{f_{\nu,c}(y_i)} \rightarrow 1,
\]
by Result (a) and \autoref{prop:limit_PDFGLM}.

We now prove Result (a) by showing that
\[
 \frac{m_\omega(\by)}{m(\by_{\Oset^\mathsf{c}})\prod_{i \in \Oset} f_{\nu,c}(y_i)} \rightarrow 1.
\]
We combine the numerator and the denominator in this expression to obtain an integral involving the same expression as in \autoref{prop:limit_PDFGLM}:
\begin{align*}
\frac{m_\omega(\by)}{m(\by_{\Oset^\mathsf{c}})\prod_{i \in \Oset} f_{\nu,c}(y_i)} &= \frac{m_\omega(\by)}{m(\by_{\Oset^\mathsf{c}})\prod_{i \in \Oset}  f_{\nu,c}(y_i)}
      \int_{\re^p} \pi_\omega(\bbeta \mid \by)  \, \d\bbeta \nonumber \\
   &=   \int_{\re^p}\frac{\pi(\bbeta) \prod_{i=1}^{n}
        f_{\nu,c}(y_i/\mu_i)/\mu_i}{m(\by_{\Oset^\mathsf{c}}) \prod_{i \in \Oset} f_{\nu,c}(y_i)} \, \d\bbeta \nonumber \\
   &=   \int_{\re^p}  \pi(\bbeta\mid \by_{\Oset^\mathsf{c}})  \prod_{i \in \Oset } \frac{f_{\nu,c}(y_i/\mu_i)/\mu_i}{f_{\nu,c}(y_i)} \, \d\bbeta =: I(\omega).
\end{align*}
By \autoref{prop:limit_PDFGLM}, we would obtain the result, that is $\lim_{\omega \rightarrow \infty} I(\omega) = 1$, if we were allowed to interchange the limit and the integral. As in the proof of \autoref{Thm}, we essentially prove that we are allowed to do so.

The form of $I(\omega)$ suggests the use of results like Lebesgue's dominated convergence theorem to prove Result (a). Analogously as in the proof of \autoref{Thm}, in the case where $y_i/\mu_i = \exp(\log(y_i) - \bx_i^T \bbeta)$ is of the order of $\omega$ for a large outlying observation (or of the order of $1 / \omega$ for a small outlying observation), we expect to be able to bound
\[
 \frac{f_{\nu,c}(y_i/\mu_i)/\mu_i}{f_{\nu,c}(y_i)}
\]
in a way that it does not depend on $\omega$ given the form of the tails of $f_{\nu,c}$ (see \autoref{sec:GLMmodel}); recall that $y_i$ is of the order of $\omega$ or $1/\omega$ for $i \in \Oset$. We follow this strategy and define a set for $\bbeta$ on which it is guaranteed that $y_i/\mu_i$ is of the order of $\omega$ for a large outlying observation and of the order of $1 / \omega$ for a small outlying observation:
\[
    S(\omega) := \bigcap_{i=1}^n \left\{\bbeta: |\bx_i^T\bbeta| \leq \log(\omega) / 2\right\}.
\]
Notice the similarity with the set with the same notation in \autoref{sec:proof}. The definition is motivated by the form of $y_i/\mu_i = \exp(\log(y_i) - \bx_i^T \bbeta)$.

We write
\[
    I(\omega) = I_1(\omega) + I_2(\omega),
\]
where
\[
    I_1(\omega) = \int_{\re^p} \ind_{S(\omega)} \,
        \pi(\bbeta\mid \by_{\Oset^\mathsf{c}}) \prod_{i \in \Oset } \frac{f_{\nu,c}(y_i/\mu_i)/\mu_i}{f_{\nu,c}(y_i)} \, \d\bbeta,
\]
and $I_2(\omega)$ is the integral on $S(\omega)^\mathsf{c}$. Note that $\ind_{S(\omega)} \rightarrow \ind_{\re^p}$ as $\omega \rightarrow \infty$ given that, for any $\bbeta \in \re^p$, there exists $\omega$ large enough so that $|\mathbf{x}_i^T\boldsymbol\beta| \leq \log(\omega) / 2$ for all $i$.

Similarly as in the proof of \autoref{Thm}, we now show that, on $S(\omega)$, the integrand in $I(\omega)$ is bounded by $\pi(\bbeta)$ times a constant, which does not depend on $\omega$ and is integrable (under \autoref{ass:priorGLM}). This implies that $\lim_{\omega \rightarrow \infty} I_1(\omega) = 1$ by Lebesgue’s dominated convergence theorem (and \autoref{prop:limit_PDFGLM}). Next, on $S(\omega)^\mathsf{c}$, we exploit the prior distribution structure to prove that $\lim_{\omega \rightarrow \infty} I_2(\omega) = 0$, which will allow to conclude that $\lim_{\omega \rightarrow \infty} I(\omega) = 1$.

 For $\bbeta \in S(\omega)$,
 \begin{align}
  \pi(\bbeta\mid \by_{\Oset^\mathsf{c}}) \prod_{i \in \Oset } \frac{f_{\nu,c}(y_i/\mu_i)/\mu_i}{f_{\nu,c}(y_i)} &\propto \pi(\bbeta) \prod_{i \in \Oset^\mathsf{c}} \frac{1}{\mu_i} f_{\nu, c}\left(\frac{y_i}{\mu_i}\right) \prod_{i \in \Oset } \frac{f_{\nu,c}(y_i/\mu_i)/\mu_i}{f_{\nu,c}(y_i)} \cr
  &\leq \pi(\bbeta) \prod_{i \in \Oset^\mathsf{c}}\frac{(\e^{-1}\nu)^\nu}{a_i \Gamma(\nu)} \prod_{i \in \Oset } \frac{f_{\nu,c}(y_i/\mu_i)/\mu_i}{f_{\nu,c}(y_i)} \cr
  &\leq \pi(\bbeta) \prod_{i \in \Oset^\mathsf{c}}\frac{(\e^{-1}\nu)^\nu}{a_i \Gamma(\nu)} \, 4^{|\Oset| \lambdal}, \label{eqn:boundGLM}
 \end{align}
using in the first line that $m(\mathbf{y}_{\Oset^\mathsf{c}}) < \infty$ (\autoref{prop:properGLM}), \autoref{lemma:1GLM} in the second line with $y_i = a_i$ for all $i \in \Oset^\mathsf{c}$, and finally that
\[
 \frac{f_{\nu,c}(y_i/\mu_i)/\mu_i}{f_{\nu,c}(y_i)} \leq 4^{\lambdal}
\]
for all $i \in  \Oset$, as we now explain.

On $\bbeta \in S(\omega)$, for $i \in \Oset$ with $y_i = b_i \omega$ a large outlying observation,
\begin{align*}
 \log(y_i / \mu_i) = \log(b_i) + \log(\omega) - \bx_i^T \bbeta \geq \log(\omega) - |\bx_i^T \bbeta| \geq \log(\omega) / 2,
\end{align*}
using that $b_i \geq 1$. Therefore, for $\omega$ large enough, we are guaranteed that $f_{\nu,c}(y_i/\mu_i)$ is evaluated on its right tail (see \autoref{sec:GLMmodel}), like $f_{\nu,c}(y_i)$:
\begin{align*}
 \frac{f_{\nu,c}(y_i/\mu_i)/\mu_i}{f_{\nu,c}(y_i)} = \frac{f_{\text{right}}(y_i/\mu_i)/\mu_i}{f_{\text{right}}(y_i)} &= \frac{f_{\text{mid}}(\zr) \frac{\zr}{y_i} \left(\frac{\log(\zr)}{\log(y_i / \mu_i)}\right)^{\lambdar}}{f_{\text{mid}}(\zr) \frac{\zr}{y_i} \left(\frac{\log \zr}{\log y_i}\right)^{\lambdar}} \cr
 &= \left(\frac{\log(y_i)}{\log(y_i / \mu_i)}\right)^{\lambdar} \cr
 &\leq \left(\frac{\log(b_i) + \log(\omega)}{\log(\omega)/2}\right)^{\lambdar} \cr
 &= \left(2\left(\frac{\log(b_i)}{\log(\omega)} + 1\right)\right)^{\lambdar} \leq 4^{\lambdal},
\end{align*}
using in the first two equalities the definition of $f_{\nu,c}$ (see \autoref{sec:GLMmodel}), in the first inequality that $\log$ is a strictly increasing function, and in the final inequality that $\log(b_i)/\log(\omega) \leq 1$ and $ \lambdar \leq \lambdal$ (see \cite{gagnon2023GLM}). We proceed similarly for the case where $i \in \Oset$ with $y_i = 1 / b_i \omega$ a small outlying observation:
\begin{align*}
 \log((y_i / \mu_i)^{-1}) = \log(b_i) + \log(\omega) + \bx_i^T \bbeta \geq \log(\omega) - |\bx_i^T \bbeta| \geq \log(\omega) / 2,
\end{align*}
using that $b_i \geq 1$, which implies that $y_i/\mu_i \leq 1 / \omega^{1/2}$. Therefore, for $\omega$ large enough, we are guaranteed in this case that $f_{\nu,c}(y_i/\mu_i)$ is evaluated on its left tail (see \autoref{sec:GLMmodel}), like $f_{\nu,c}(y_i)$:
\begin{align*}
 \frac{f_{\nu,c}(y_i/\mu_i)/\mu_i}{f_{\nu,c}(y_i)} = \frac{f_{\text{left}}(y_i/\mu_i)/\mu_i}{f_{\text{left}}(y_i)} &= \frac{f_{\text{mid}}(\zl) \frac{\zl}{y_i} \left(\frac{\log(1 / \zl)}{\log(\mu_i / y_i)}\right)^{\lambdal}}{f_{\text{mid}}(\zl) \frac{\zl}{y_i} \left(\frac{\log(1 / \zl)}{\log(1 / y_i)}\right)^{\lambdal}} \cr
 &\leq \left(\frac{\log(b_i) + \log(\omega)}{\log(\omega)/2}\right)^{\lambdal} \leq 4^{\lambdal},
\end{align*}
using the same arguments as for the case where $y_i = b_i \omega$ is a large outlying observation, except that we do not need to use $ \lambdar \leq \lambdal$.

Thus, using \eqref{eqn:boundGLM}, we have an upper bound on the integrand in $I_1(\omega)$ given by $\pi(\bbeta)$ times a constant, which is integrable under \autoref{ass:priorGLM}. Therefore, by Lebesgue's dominated convergence theorem and \autoref{prop:limit_PDFGLM}, $\lim_{\omega \rightarrow \infty} I_1(\omega) = 1$.

We now turn to proving that $\lim_{\omega \rightarrow \infty} I_2(\omega) = 0$. We have that
\begin{align*}
 &\int_{\re^p} \ind_{S(\omega)^\mathsf{c}} \,
        \pi(\bbeta\mid \by_{\Oset^\mathsf{c}}) \prod_{i \in \Oset } \frac{f_{\nu,c}(y_i/\mu_i)/\mu_i}{f_{\nu,c}(y_i)} \, \d\bbeta \cr
 &\quad \propto \int_{\re^p} \ind_{S(\omega)^\mathsf{c}} \,
        \pi(\bbeta) \prod_{i \in \Oset^\mathsf{c}} \frac{1}{\mu_i} f_{\nu, c}\left(\frac{y_i}{\mu_i}\right) \prod_{i \in \Oset } \frac{f_{\nu,c}(y_i/\mu_i)/\mu_i}{f_{\nu,c}(y_i)} \, \d\bbeta \cr
 &\quad \leq  \int_{\re^p} \ind_{S(\omega)^\mathsf{c}} \,
        \pi(\bbeta) \prod_{i \in \Oset^\mathsf{c}} \frac{(\e^{-1}\nu)^\nu}{a_i \Gamma(\nu)} \prod_{i \in \Oset } \frac{(\e^{-1}\nu)^\nu/(y_i\Gamma(\nu))}{f_{\nu,c}(y_i)} \, \d\bbeta \cr
 &\quad = \prod_{i \in \Oset^\mathsf{c}} \frac{(\e^{-1}\nu)^\nu}{a_i \Gamma(\nu)} \prod_{i \in \Oset } \frac{(\e^{-1}\nu)^\nu/(y_i\Gamma(\nu))}{f_{\nu,c}(y_i)} \, \Prob\left(\bigcup_{i=1}^n\left\{\bbeta: |\bx_i^T\bbeta| > \log(\omega) / 2\right\}\right),
\end{align*}
using \autoref{lemma:1GLM} in the inequality with $y_i = a_i$ for all $i \in \Oset^\mathsf{c}$.

We finish the proof by showing that $\Prob\left(\bigcup_{i=1}^n\left\{\bbeta: |\bx_i^T\bbeta| > \log(\omega) / 2\right\}\right)$ goes to 0 more quickly than
\[
 \prod_{i \in \Oset } \frac{(\e^{-1}\nu)^\nu/(y_i\Gamma(\nu))}{f_{\nu,c}(y_i)}
\]
 goes to infinity. Similarly as in the proof of \autoref{Thm} and as in \autoref{sec:alternativeprior},
\begin{align*}
\Prob\left(\bigcup_{i=1}^n \left\{\bbeta: |\bx_i^T\bbeta| > \log(\omega) / 2\right\} \right) &\leq \sum_{i=1}^n  \Prob\left\{\bbeta: |\bx_i^T\bbeta| > \log(\omega) / 2\right\} \cr
 &= \sum_{i=1}^n  \Prob\left\{\bbeta: |\bx_i^T\bbeta| > \omega_0 / 2\right\} \cr
 &\leq  \sum_{i=1}^n  2 \exp\left(- c_1 \, \frac{\omega_0 / 2 - \bx_i^T \E[\bbeta]}{K \|\bx_i\|_{\infty}}\right)
\end{align*}
using the union bound in the first line, the definition $\omega_0 = \log \omega$ in the second line and the tail bound in \autoref{lemma:2GLM} (presented below in \autoref{sec:lemmas_GLM}) in the final line, where $c_1 > 0$ is an absolute constant, $K = \max_j \|\beta_j - \E[\beta_j]\|_{\psi_1}$ and $\|\, \cdot \, \|_\infty$ is the infinity norm, $\|\, \cdot \,\|_{\psi_1}$ being the (finite) \textit{sub-exponential norm} \citep[Definition 2.7.5]{vershynin2018high}. \autoref{lemma:2GLM} is essentially an application of Theorem 2.8.2 in \cite{vershynin2018high} where the difference is that we account for the fact that $\beta_j$ does not necessarily have a mean of 0. Theorem 2.8.2 in \cite{vershynin2018high} can be seen as a statement that the distribution of a linear combination of mean-zero sub-exponential random variables has tails that behave like those of the distribution of one sub-exponential random variable.

Thus, $\Prob\left(\bigcup_{i=1}^n\left\{\bbeta: |\bx_i^T\bbeta| > \omega_0 / 2\right\}\right)$ goes to 0 exponentially quickly (in $\omega_0$). We now prove that
\[
 \prod_{i \in \Oset } \frac{(\e^{-1}\nu)^\nu/(y_i\Gamma(\nu))}{f_{\nu,c}(y_i)}
\]
goes to infinity polynomially quickly (in $\omega_0$), which will conclude the proof. For $y_i = b_i \omega$ a large outlying observation,
\begin{align*}
 \frac{(\e^{-1}\nu)^\nu/(y_i\Gamma(\nu))}{f_{\nu,c}(y_i)} = \frac{(\e^{-1}\nu)^\nu/(y_i\Gamma(\nu))}{f_{\text{right}}(y_i)} &=  \frac{(\e^{-1}\nu)^\nu/(y_i\Gamma(\nu))}{f_{\text{mid}}(\zr) \frac{\zr}{y_i} \left(\frac{\log \zr}{\log y_i}\right)^{\lambdar}} \cr
 &= \frac{(\e^{-1}\nu)^\nu}{\Gamma(\nu) f_{\text{mid}}(\zr) \, \zr} \left(\frac{\log(b_i) + \log(\omega)}{\log \zr}\right)^{\lambdar},
\end{align*}
using in the first two equalities the definition of $f_{\nu,c}$ (see \autoref{sec:GLMmodel}). With $\omega_0 = \log \omega$, we observe that the speed of the increase of this term is polynomial in $\omega_0$. We also have a polynomial increase in $\omega_0$ for $y_i = 1 / b_i \omega$ a small outlying observation:
\begin{align*}
 \frac{(\e^{-1}\nu)^\nu/(y_i\Gamma(\nu))}{f_{\nu,c}(y_i)} = \frac{(\e^{-1}\nu)^\nu/(y_i\Gamma(\nu))}{f_{\text{left}}(y_i)} &=  \frac{(\e^{-1}\nu)^\nu/(y_i\Gamma(\nu))}{f_{\text{mid}}(\zl) \frac{\zl}{y_i} \left(\frac{\log(1 / \zl)}{\log(1 / y_i)}\right)^{\lambdal}} \cr
 &= \frac{(\e^{-1}\nu)^\nu}{\Gamma(\nu) f_{\text{mid}}(\zl) \, \zl} \left(\frac{\log(b_i) + \log(\omega)}{\log(1 / \zl)}\right)^{\lambdal},
\end{align*}
using again in the first two equalities the definition of $f_{\nu,c}$ (see \autoref{sec:GLMmodel}). This concludes the proof.
\end{proof}

\subsection{Two lemmas}\label{sec:lemmas_GLM}

In this section, we present two lemmas used in the proof of \autoref{thm:robustnessGLM}.

\begin{Lemma}\label{lemma:1GLM}
Viewed as a function of $\mu > 0$, $f_{\nu,c}(y/\mu)/\mu$ is strictly increasing on $(0, y)$ and then strictly decreasing on $(y, \infty)$, for all $\nu, c, y > 0$. It is thus unimodal with a mode at $\mu = y$, and in particular, it is bounded above by $(\e^{-1}\nu)^\nu/(y\Gamma(\nu))$.
\end{Lemma}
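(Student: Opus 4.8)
The plan is to reduce the two-variable statement to a one-variable unimodality claim via the substitution $z = y/\mu$. Write $h(\mu) := f_{\nu,c}(y/\mu)/\mu$; since $1/\mu = z/y$, we have $h(\mu) = y^{-1}\,\tilde h(z)$ with $\tilde h(z) := z\,f_{\nu,c}(z)$ and $z = y/\mu$. Because $\mu \mapsto y/\mu$ is a strictly decreasing bijection of $(0,\infty)$ onto itself sending $\mu = y$ to $z = 1$, it suffices to prove that $\tilde h$ is strictly increasing on $(0,1)$ and strictly decreasing on $(1,\infty)$. The claimed behaviour of $h$ (increasing on $(0,y)$, decreasing on $(y,\infty)$, mode at $\mu = y$) then follows at once, with $h(y) = \tilde h(1)/y = f_{\nu,c}(1)/y = f_{\text{mid}}(1)/y = (\e^{-1}\nu)^\nu/(y\Gamma(\nu))$, using that $\zl < 1 < \zr$ forces $z = 1$ into the middle piece of \eqref{eq:proposed}.

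Next I would analyze $\tilde h$ on each piece of \eqref{eq:proposed}. On the middle region $\zl \le z \le \zr$ one has $\tilde h(z) = \nu^\nu\Gamma(\nu)^{-1}\,z^{\nu}\e^{-\nu z}$, so $\frac{\d}{\d z}\log\tilde h(z) = \nu(1-z)/z$, which is positive for $z<1$ and negative for $z>1$; hence $\tilde h$ strictly increases on $(\zl,1)$ and strictly decreases on $(1,\zr)$. On the right tail $z > \zr$, the factor $z$ cancels the $1/z$ in $f_{\text{right}}$, leaving $\tilde h(z) = f_{\text{mid}}(\zr)\,\zr\,(\log\zr/\log z)^{\lambdar}$, a positive constant times a strictly decreasing function of $z$ (since $\log z > \log\zr > 0$ is increasing and $\lambdar > 0$). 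Symmetrically, when $\zl > 0$, on the left tail $0 < z < \zl$ the factor $z$ cancels the $1/z$ in $f_{\text{left}}$, giving $\tilde h(z) = f_{\text{mid}}(\zl)\,\zl\,(\log(1/\zl)/\log(1/z))^{\lambdal}$, which is strictly increasing in $z$ (since $\log(1/z) > \log(1/\zl) > 0$ is decreasing in $z$ and $\lambdal > 0$); when $\zl = 0$ there is no left piece and the middle formula already covers $(0,\zr]$. The positivity $\lambdar,\lambdal > 0$ is read directly off their definitions, each being $1$ plus a ratio of strictly positive quantities.

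It then remains to glue the pieces, using that $f_{\nu,c}$, hence $\tilde h$, is continuous on $(0,\infty)$: at $z = \zr$ both the middle and right formulas evaluate to $\zr f_{\text{mid}}(\zr)$, and at $z = \zl$ (when $\zl > 0$) both the left and middle formulas evaluate to $\zl f_{\text{mid}}(\zl)$. Combining continuity, the piecewise monotonicity, and $\zl < 1 < \zr$ (note $\zr = 1 + c/\sqrt{\nu} > 1$ and $\zl \le \max\{0,\,1 - c/\sqrt{\nu}\} < 1$), we conclude that $\tilde h$ strictly increases on $(0,1)$ and strictly decreases on $(1,\infty)$, so it is unimodal with unique maximum $\tilde h(1) = (\e^{-1}\nu)^\nu/\Gamma(\nu)$. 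Translating back through $z = y/\mu$ yields every assertion of the lemma, including the upper bound. I do not expect a genuine obstacle here; the only points requiring care are verifying continuity at the two junctions, checking the signs of $\lambdar,\lambdal$, and confirming that $z = 1$ lies strictly inside the middle region — all immediate from \eqref{eq:proposed}.
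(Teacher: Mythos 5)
Your proposal is correct, and it is complete: the substitution $z = y/\mu$ correctly reduces the claim to showing that $\tilde h(z) = z\,f_{\nu,c}(z)$ increases on $(0,1)$ and decreases on $(1,\infty)$ (the orientation is handled properly, since the bijection $\mu \mapsto y/\mu$ is decreasing and sends $\mu=y$ to $z=1$); the piecewise computations are right ($z$ cancels the $1/z$ in both tails, the middle piece becomes the gamma kernel $z^{\nu}\e^{-\nu z}$ with log-derivative $\nu(1-z)/z$), the positivity of $\lambdar$ and $\lambdal$ and the inclusion $\zl < 1 < \zr$ are verified, and the gluing by continuity at $\zl$ and $\zr$ gives the global strict monotonicity, with maximum $\tilde h(1) = (\e^{-1}\nu)^\nu/\Gamma(\nu)$ and hence the stated bound $(\e^{-1}\nu)^\nu/(y\Gamma(\nu))$. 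Note, however, that this paper does not contain a proof of \autoref{lemma:1GLM} at all: it simply defers to \cite{gagnon2023GLM}, so there is no in-paper argument to compare against. Your self-contained derivation is therefore a useful supplement; its main virtue is that it replaces what would otherwise be a direct two-variable monotonicity analysis in $\mu$ (for each fixed $y$) by a single one-variable unimodality statement about $z\,f_{\nu,c}(z)$, which makes transparent why the mode sits at $\mu = y$ and why the maximal value does not depend on $c$ or on which tail regime is active.
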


See \cite{gagnon2023GLM} for the proof of \autoref{lemma:1GLM}.

\begin{Lemma}\label{lemma:2GLM} 
Assume that $\bbeta = (\beta_1, \ldots, \beta_p)^T \in \re^p$ is a random vector such that its components are independent and each $\beta_j$ has a sub-exponential distribution. For any fixed $\bx_i \in \re^p$ and large enough $\omega > 0$,
\[
  \Prob\left\{\bbeta: |\bx_i^T\bbeta| > \omega / 2\right\} \leq 2 \exp\left(- c_1 \, \frac{\omega / 2 - |\bx_i^T \E[\bbeta]|}{K \|\bx_i\|_{\infty}}\right),
\]
where $c_1 > 0$ is an absolute constant, $K = \max_j \|\beta_j - \E[\beta_j]\|_{\psi_1}$ and $\|\, \cdot \,\|_\infty$ is the infinity norm, $\|\, \cdot \,\|_{\psi_1}$ being the (finite) \textit{sub-exponential norm} \citep[Definition 2.7.5]{vershynin2018high}.
\end{Lemma}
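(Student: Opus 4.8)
The plan is to deduce the bound from Bernstein's inequality for sub-exponential random variables (Theorem 2.8.2 in \cite{vershynin2018high}) after centering. First I would write $\bx_i^T \bbeta = \bx_i^T \E[\bbeta] + \bx_i^T \tilde\bbeta$, where $\tilde\bbeta := \bbeta - \E[\bbeta]$. Since each $\beta_j$ is sub-exponential, it has a finite mean (so $\E[\bbeta]$ is well defined), and the components $\tilde\beta_1, \ldots, \tilde\beta_p$ of $\tilde\bbeta$ are independent, mean-zero and sub-exponential with sub-exponential norm at most $K$, by the very definition of $K$. By the triangle inequality, the event $\{|\bx_i^T \bbeta| > \omega/2\}$ is contained in $\{|\bx_i^T \tilde\bbeta| > \omega/2 - |\bx_i^T \E[\bbeta]|\}$, so it suffices to bound the probability of the latter.

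Next, for $\omega$ large enough the threshold $t := \omega/2 - |\bx_i^T \E[\bbeta]|$ is positive, and Theorem 2.8.2 in \cite{vershynin2018high}, applied with coefficient vector $\bx_i$ and the mean-zero sub-exponential variables $\tilde\beta_1, \ldots, \tilde\beta_p$, gives
\[
 \Prob\left\{|\bx_i^T \tilde\bbeta| \geq t\right\} \leq 2 \exp\left[- c \min\left(\frac{t^2}{K^2 \|\bx_i\|_2^2}, \frac{t}{K \|\bx_i\|_\infty}\right)\right]
\]
for some absolute constant $c > 0$. It then remains to observe that, once $t \geq K \|\bx_i\|_2^2 / \|\bx_i\|_\infty$ — which holds for all $\omega$ beyond an explicit constant depending only on $\bx_i$, $K$ and $\E[\bbeta]$ — the minimum is attained at the linear term $t / (K \|\bx_i\|_\infty)$. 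Substituting $t = \omega/2 - |\bx_i^T \E[\bbeta]|$ and taking $c_1 = c$ then yields exactly the claimed inequality.

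I do not anticipate a genuine obstacle here; the statement is essentially a restatement of the cited concentration bound. The only points requiring a word of care are that centering preserves sub-exponentiality with the stated norm (immediate from the definition of $K$, so no auxiliary lemma is needed), and that the phrase ``large enough $\omega$'' must simultaneously ensure $t > 0$ and that the linear branch of the minimum is the active one — both being guaranteed once $\omega$ exceeds a threshold depending only on $\bx_i$, $K$ and $\E[\bbeta]$.
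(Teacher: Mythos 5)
Your proposal is correct and follows essentially the same route as the paper's proof: center $\bbeta$ at its mean, reduce the event to one about $|\bx_i^T(\bbeta - \E[\bbeta])|$ exceeding $\omega/2 - |\bx_i^T\E[\bbeta]|$, apply Theorem 2.8.2 of Vershynin, and note that for large $\omega$ the linear branch of the minimum is the active one. The only (inessential) difference is that you handle the sign of $\bx_i^T\E[\bbeta]$ in one stroke via the triangle inequality, whereas the paper splits into the cases $\bx_i^T\E[\bbeta] \geq 0$ and $< 0$ and argues symmetrically.
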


\begin{proof}
  Let us consider that $\E[\bx_i^T \bbeta] = \bx_i^T \E[\bbeta] \geq 0$. We proceed symmetrically if $\bx_i^T \E[\bbeta] < 0$. For $\omega$ large enough,
 \begin{align*}
  \Prob\left\{\bbeta: |\bx_i^T\bbeta| > \omega / 2\right\} &= \Prob\left\{\bbeta: \bx_i^T\bbeta > \omega / 2 \text{ or } \bx_i^T\bbeta < -\omega / 2 \right\} \cr
  &= \Prob\left\{\bbeta: \bx_i^T\bbeta - \bx_i^T \E[\bbeta] > \omega / 2 - \bx_i^T \E[\bbeta] \text{ or } \bx_i^T\bbeta - \bx_i^T \E[\bbeta] < -\omega / 2 - \bx_i^T \E[\bbeta] \right\} \cr
  &\leq \Prob\left\{\bbeta: \bx_i^T\bbeta - \bx_i^T \E[\bbeta] > \omega / 2 - \bx_i^T \E[\bbeta] \text{ or } \bx_i^T\bbeta - \bx_i^T \E[\bbeta] < -\omega / 2 + \bx_i^T \E[\bbeta] \right\} \cr
  &= \Prob\left\{\bbeta: |\bx_i^T\bbeta - \bx_i^T \E[\bbeta]| > \omega / 2 - \bx_i^T \E[\bbeta]\right\} \cr
  &\leq 2 \exp\left(- c_1 \, \min\left\{\frac{(\omega / 2 - \bx_i^T \E[\bbeta])^2}{K^2 \|\bx_i\|^2}, \frac{\omega / 2 - \bx_i^T \E[\bbeta]}{K \|\bx_i\|_{\infty}}\right\}\right),
 \end{align*}
 using in the first inequality that $-\omega / 2 - \bx_i^T \E[\bbeta] \leq -\omega / 2 + \bx_i^T \E[\bbeta]$ and Theorem 2.8.2 of \cite{vershynin2018high} in the second inequality. For $\omega$ large enough,
 \[
  2 \exp\left(- c_1 \, \min\left\{\frac{(\omega / 2 - \bx_i^T \E[\bbeta])^2}{K^2 \|\bx_i\|^2}, \frac{\omega / 2 - \bx_i^T \E[\bbeta]}{K \|\bx_i\|_{\infty}}\right\}\right) = 2 \exp\left(- c_1 \, \frac{\omega / 2 - \bx_i^T \E[\bbeta]}{K \|\bx_i\|_{\infty}}\right).
 \]
\end{proof}

\section{Alternative to \autoref{ass:prior}}\label{sec:alternativeprior}

In this section, we show that \autoref{Thm} holds for an important class of prior distributions, with essentially the same proof.

\begin{Assumption}[Alternative to \autoref{ass:prior}]\label{ass:prioralternative}
   The prior distribution is such that $\bbeta$ and $\sigma$ are independent. The distribution of $\sigma^2$ has finite inverse moments. The components of $\bbeta = (\beta_1, \ldots, \beta_p)^T$ are independent and each $\beta_j$ has a sub-exponential distribution.
\end{Assumption}

 It can be readily verified that, up to the point where we prove that $\lim_{\omega \rightarrow \infty} I_2(\omega) = 0$, we can proceed as in the proof of \autoref{Thm} (\autoref{sec:proof}) because it is only required that the prior distribution of $\bbeta$ is proper and that the prior distribution of $\sigma^2$ has finite inverse moments, which holds under \autoref{ass:prioralternative}. When we prove that $\lim_{\omega \rightarrow \infty} I_2(\omega) = 0$, we can use the same arguments as in the proof of \autoref{Thm} to obtain
 \begin{align*}
  I_2(\omega) &= \int_{\re^p} \int_0^\infty  \ind_{S(\omega)^\mathsf{c}} \, \pi(\bbeta, \sigma\mid \by_{\Oset^\mathsf{c}})  \prod_{i \in \Oset } \frac{(1/\sigma)f((y_i - \bx_i^T \bbeta)/\sigma)}{g(\sigma) f(y_i)} \, \d \sigma \, \d\bbeta \cr
& \leq C^n \int_{\re^p} \int_0^\infty  \ind_{S(\omega)^\mathsf{c}} \, \pi(\bbeta, \sigma) \, \frac{1}{\sigma^n} \prod_{i \in \Oset } \frac{1}{f(2b_i\omega)} \, \d \sigma \, \d\bbeta.
 \end{align*}
 A difference with the proof of \autoref{Thm} is that, under \autoref{ass:prioralternative}, $\bbeta$ and $\sigma$ are independent, and therefore
 \begin{align*}
  &C^n \int_{\re^p} \int_0^\infty  \ind_{S(\omega)^\mathsf{c}} \, \pi(\bbeta, \sigma) \, \frac{1}{\sigma^n} \prod_{i \in \Oset } \frac{1}{f(2b_i\omega)} \, \d \sigma \d\bbeta \cr
  &\quad\propto \left(\prod_{i \in \Oset } \frac{1}{f(2b_i\omega)}\right) \E[\sigma^{-n}]  \Prob\left(\bigcup_{i=1}^n \left\{\bbeta: |\bx_i^T\bbeta| > \omega / 2\right\}\right).
 \end{align*}
 We have that $\E[\sigma^{-n}]$ is finite because $\sigma^2$ has finite inverse moments under \autoref{ass:prioralternative}. As mentioned in the proof of \autoref{Thm}, $\prod_{i \in \Oset } f(2b_i\omega)^{-1}$ goes to infinity polynomially quickly under \autoref{ass:PDF}. Therefore, we can conclude that $\lim_{\omega \rightarrow \infty} I_2(\omega) = 0$ if
 \[
  \Prob\left(\bigcup_{i=1}^n \left\{\bbeta: |\bx_i^T\bbeta| > \omega / 2\right\}\right)
 \]
 goes to 0 exponentially quickly, which we prove under \autoref{ass:prioralternative}. We have that
  \begin{align*}
  \Prob\left(\bigcup_{i=1}^n \left\{\bbeta: |\bx_i^T\bbeta| > \omega / 2\right\}\right) &\leq \sum_{i=1}^n \Prob\left\{\bbeta: |\bx_i^T\bbeta| > \omega / 2\right\} \cr
  &\leq 2 \exp\left(- c_1 \, \frac{\omega / 2 - \bx_i^T \E[\bbeta]}{K \|\bx_i\|_{\infty}}\right),
 \end{align*}
 using the union bound in the first inequality and the tail bound in \autoref{lemma:2GLM} in the second inequality, where $c_1 > 0$ is an absolute constant, $K = \max_j \|\beta_j - \E[\beta_j]\|_{\psi_1}$ and $\|\, \cdot \,\|_\infty$ is the infinity norm, $\|\, \cdot \,\|_{\psi_1}$ being the (finite) \textit{sub-exponential norm} \citep[Definition 2.7.5]{vershynin2018high}. As mentioned in \autoref{sec:resultGLM}, \autoref{lemma:2GLM} is essentially an application of Theorem 2.8.2 in \cite{vershynin2018high} where the difference is that we account for the fact that $\beta_j$ does not necessarily have a mean of 0. Theorem 2.8.2 in \cite{vershynin2018high} can be seen as a statement that the distribution of a linear combination of mean-zero sub-exponential random variables has tails that behave like those of the distribution of one sub-exponential random variable. This concludes the demonstration that \autoref{Thm} holds if we replace \autoref{ass:prior} by \autoref{ass:prioralternative}, while leaving the proof of \autoref{Thm} essentially unchanged.

\end{document}